\newtheorem{thm}{Theorem}[section]
\newtheorem{lem}[thm]{Lemma}
\newtheorem{remark}[thm]{Remark}
\title[Reduced models for spectral inversion]{Reduced order models for spectral domain inversion: Embedding into the continuous problem and generation of internal data.}
\author{L. Borcea, V. Druskin, A.V. Mamonov, S. Moskow, M. Zaslavsky}
\begin{document}
\begin{abstract} We generate data-driven reduced order models (ROMs) for inversion of the one and two dimensional Schr\"odinger equation in the spectral domain given boundary data at a few frequencies. The ROM is the Galerkin projection of the Schr\"odinger operator onto the space spanned by solutions at these sample frequencies. The ROM matrix is in general full, and not good for extracting the potential. However, using an orthogonal change of basis via Lanczos iteration, we can transform the ROM to a block triadiagonal form from which it is easier to extract $q$. In one dimension, the tridiagonal matrix corresponds to a three-point staggered finite-difference system for the Schr\"odinger operator discretized on a so-called  spectrally matched grid which is almost independent of the medium. In higher dimensions, the orthogonalized basis functions play the role of the grid steps. The orthogonalized basis functions are localized and also depend only very weakly on the medium, and thus by embedding into the continuous problem, the reduced order model yields highly accurate internal solutions. That is to say, we can obtain, just from boundary data, very good approximations of the solution of the Schr\"odinger equation in the whole domain for a spectral interval that includes the sample frequencies. We present inversion experiments based on the internal solutions in one and two dimensions. \end{abstract}
\maketitle
%\section{}
%\subsection{}

\section{Introduction}
In this work we consider the following problem on a bounded domain $\Omega$ in $d$ dimensions  
\begin{eqnarray} \label{2dprob1} -\Delta u(x; \lambda )+q(x) u(x;\lambda ) +\lambda u(x; \lambda)  & = & 0  \ \ \mbox{for}\ \ \  x\in\Omega\subset \mathbb{R}^d \\   {\partial u\over {\partial \nu  }} (x;\lambda) & = & g\ \ \mbox{for} \ \  x\in \partial \Omega\nonumber 
\end{eqnarray}
for $\nu$ the outward unit normal to the boundary $\partial\Omega$ and $q\in L^\infty(\Omega)$. That is, $u(x; \lambda) $ is the solution to the Schr\"odinger equation with Neumann data corresponding to spectral parameter $\lambda$, which here we assume to be positive real. 
We apply $K$  Neumann boundary functions $\{ g_1, \ldots,g_K \} $, with each $g_r\in H^{-1/2}(\partial \Omega)$ and read the corresponding Dirichlet data $u(x; \lambda)$ on the boundary $\partial\Omega$ for multiple values of $\lambda$, and are interested in determining $q(x)$ from this data. 
The reconstruction of $q(x)$ from  boundary measurements is known to be an ill-posed problem and, depending on the physical setup can result in reconstructions with very poor resolution \cite{AS-IP-10}.  The map from $q(x)$ to the data is highly nonlinear, and due to ill-posedness is difficult to invert unless regularization is added.  As examples of literature on this subject, see \cite{Bo},\cite{ArSc},\cite{Uh}. 

A relatively new class of approaches employ ideas from model reduction theory to do direct inversion using reduced order forward models \cite{BoDrMaZa},\cite{DrMaThZa},\cite{BoDrMaZa2},\cite{BoDrMaZa3},\cite{DrMaZa} with impressive numerical results. 
These works have grown out of earlier work on spectrally matched finite difference grids \cite{DrKn}, where special grid steps were chosen so that the numerical solution matched functionals of the solution at the boundary in the spectral domain.  The spectrally matched grid steps depended very weakly on the medium, and this allowed for the direct extraction of the unknown coefficients \cite{BoDr}. 
There were some extensions of this to higher dimensions \cite{BoDrMa}, \cite{BoDrMaGu}, \cite{BoDrGuMa}, however, the grid itself was fundamentally one-dimensional, blocking the generation of truly higher dimensional direct inversion methods, except for special geometries.  The more recent works use a more general reduced order model approach  \cite{BoDrMaZa},\cite{BoDrMaZa2},\cite{BoDrMaZa3},\cite{DrMaThZa},\cite{DrMaZa} .

Here we show how one can obtain spectrally matched finite difference operators with well chosen localized Galerkin basis functions.  Using the basis functions, higher dimensional extensions are natural. This was essentially what was used in \cite{DrMaThZa},\cite{BoDrMaZa} in the time domain. In this work we consider the spectral domain, and we show explicitly how to view the reduced order models (ROMs) through the spectral Galerkin equivalence mentioned above. The Galerkin equivalence of spectrally matched grids was known {\it on the boundary} from \cite{DrMo}, however the basis in which the Galerkin model is the same as the finite difference operator everywhere was not known. 

A main advantage first observed in \cite{BoDrMaZa} is that, like the grid steps,  {\it the basis functions appear to depend only very weakly on the medium}. It is this crucial observation that allows one to use the ROM for inversion.
Furthermore, by viewing the reduced order model as embedded into the continuous problem and using the reference medium basis functions, good approximations of the internal solutions can be produced from the boundary data. We will demonstrate this with numerical examples in one and two dimensions. 

The paper is organized as follows.  In Section 2 we state the reconstruction problem in general. In Section 3 we consider a one dimensional version with one source. In Section 3.1 we follow the Loewner framework \cite{AnLeSa} and describe how one obtains a Galerkin model from the data directly, in Section 3.2 we describe the orthogonalization and prove the equivalence everywhere with spectrally matched grids. In Section 3.3 we present one dimensional numerical examples along with the internal solution generation/reconstruction algorithm.  In Section 4 we describe the same approach in higher dimensions; including the generation of the ROM from the data in Section 4.1 and the orthogonalization procedure in Section 4.2. Finally in Section 4.3 we present two dimensional numerical examples and the internal solution generation/reconstruction algorithm, and Section 5 contains some concluding remarks. 
\section{Problem statement.}
Consider the problem (\ref{2dprob1}) where we apply $K$  Neumann boundary functions $\{ g_1, \ldots,g_K \} $, with each $g=g_r\in H^{-1/2}(\partial \Omega)$ and denote by $u^r(x;\lambda)$ the solution to (\ref{2dprob1}) 
corresponding to spectral value $\lambda$ and source $g=g_r$. We read data in the form of the matrix valued transfer function as depending on the spectral parameter
$$ F_{rl}(\lambda) := \int_{\partial \Omega} u^r(x;\lambda) g_l(x)d\sigma_x . $$
If, for example, each Neumann function is an approximate delta function corresponding to a source/receiver location, 
for $r,l =1,\ldots,K$, the transfer function $F_{rl}(\lambda)$  corresponds to reading the solution coming from the $r$th "source"  at the $l$th "receiver".  
We take $m$ spectral values $\lambda= b_1,\ldots, b_m$, 
and define $$u^r_i := u^r(x; b_i), $$ the solution to (\ref{2dprob1}) at $\lambda=b_i$ with $g=g_r$. Now let us assume that we are in possession of boundary data of the form  
$$ F^i_{rl} := F_{rl}(b_i)= \int_{\partial \Omega} u_i^r g_l $$
and 
$$DF^i_{rl} := {d F_{rl}\over{d\lambda}}(\lambda)|_{\lambda=b_i} $$
 where the derivative here is with respect to $\lambda$. The inverse problem is to determine $q$ from the data \begin{equation} \label{data} \{ F^i_{rl}, DF^i_{rl} \} \ \ \ \mbox{for} \ \ 
 i=1,\dots, m\ \  \mbox{and}\ \  r,l=1,\ldots,K. \end{equation} Such data can be obtained from the time domain via a Fourier transform. 
\begin{remark} As we will see below, from this data one can find a ROM which matches the data exactly. However,  it is not at all crucial that the data be of the Hermite form (\ref{data}). In fact, matching point values of $F$ at $2m$ spectral points leads to a ROM with even better approximation properties and still yields a stable system \cite{DrKn}, \cite{DrMo}. Other forms of spectral data are also possible \cite{AnLeSa},\cite{DrMo}. However, we consider only the Hermite data (\ref{data}) in this work for simplicity. 
\end{remark}

\section{One dimensional problems.}
\subsection{One dimensional Galerkin model from spectral data.}
To begin, we follow the Loewner framework for model reduction \cite{AnLeSa}.
Consider solving the one dimensional, one source version of (\ref{2dprob1}),
\begin{eqnarray} \label{1d1side} - {d^2\over{dx^2}} u(x;\lambda) +q(x) u(x;\lambda)+\lambda u(x;\lambda) & = & 0 \ \ \mbox{for} \ \ x\in (0,1) \\   - {d\over{dx}} u(0;\lambda)  =  1 & & {d \over{dx}} u(1;\lambda) = 0 \nonumber 
\end{eqnarray}
where we read the data
$u(0;\lambda)$ and ${d\over{d\lambda}}u(0;\lambda)$ for $\lambda>0 $ as would be natural coming from the time domain. ( ln principle $\lambda$ could be anywhere in $\mathbb{C}$ away from the Neumann eigenvalues of the Schr\"odinger operator ).  One sees easily that (\ref{1d1side}) has the variational form: Find $u\in H^1((0,1))$ such that 
\begin{equation}\label{varform1d} \int_0^1  u^\prime \phi^\prime +\int_0^1 q u \phi + \lambda \int_0^1 u\phi = \phi(0)  \end{equation}
for all $\phi\in H^1((0,1))$. We note that by Sobolev embedding and the trace theorem the values of $\phi(0)$ are well defined for all test functions in $H^1$.  For any given value of $\lambda$, let us define our true spectral domain medium response by 
$$ F(\lambda)= u(0; \lambda ) $$
where $u(x,\lambda)$ is the solution to (\ref{varform1d}). 

Consider the exact solutions  $u_1,\ldots, u_m$  to (\ref{varform1d}) where $$u_j(x):= u(x, b_j).$$ These form the subspace $$V_m= \mbox{span} \{ u_1,\ldots, u_m\},$$ and we can consider the Galerkin solution $u_G$ to the variational problem: Find $u_G \in V_m$ such that \begin{equation}\label{Gvarform1d} \int_0^1  u_{G}^\prime \phi^\prime +\int_0^1 q u_{G} \phi + \lambda \int_0^1 u_{G}\phi = \phi(0)  \end{equation}
for any $\phi\in V_m$.  Searching for the unknown coefficients $\{ c_i\}$ for the solution $u_{G} =\Sigma_{i=1}^m c_i u_i$ and by setting $\phi= u_j$, one obtains the system
\begin{equation}\label{galROM} (S+\lambda M) \vec{c}= \vec{F} \end{equation}
where $\vec{F} = ( u_1(0), \dots, u_m(0))^\top = (F(b_1), \dots, F(b_m) )^\top $, the vector $\vec{c}=( c_1, \ldots c_m)^\top $, and
$M,S$ are the $m\times m$ mass and stiffness matrices given by 
$$M_{ij} = \int_0^1 u_i u_j $$
and
$$ S_{ij} = \int_0^1  u_i^\prime u_j^\prime +\int_0^1 q u_i u_j . $$
Using the variational formulation (\ref{Gvarform1d}) with $u_G=u_i$ and $\phi=u_j$ (recall $u_i$ is the exact solution for $\lambda=b_i$),   we obtain that 
\begin{equation} \label{varsys2} S_{ij} + b_i M_{ij} = F(b_j) \ \ \ \mbox{for all}\ i,j=1,\ldots, m. \end{equation}
Using this, if we are in possession of data $\{ F(b_j), F^\prime (b_j) : j=1,\ldots, m\}$, we can actually directly reconstruct $S$ and $M$. By reversing $i$ and $j$ and subtracting, for $i\neq j$ we have 
$$ (b_i-b_j) M_{ij}= F(b_j)- F(b_i) $$
or
\begin{equation} \label{Meq}  M_{ij}= { F(b_j)- F(b_i) \over{b_i-b_j}} \end{equation} 
where we have used the symmetry of $S$ and $M$. Imagine now that one were to have data $F(z)$ corresponding to solution $u_z$ for $z$ some spectral point close to $b_i$. Then by the above one has the formula 
$$\int_0^1 u_i u_z =  { F(z)- F(b_i) \over{b_i-z}}.$$
 Taking $z\rightarrow b_i$, we have that $u_z\rightarrow u_i$, so that 
\begin{equation} \label{Meq2} M_{ii}= \int_0^1 u_i^2 = - F^\prime (b_i) .\end{equation}
Multiplying (\ref{varsys2}) by $b_j$, reversing $i$ and $j$ and subtracting, we get 
$$ (b_j-b_i) S_{ij}= b_j F(b_j)- b_i F(b_i) $$
or \begin{equation} \label{Seq}  S_{ij}= {b_j F(b_j)- b_i F(b_i) \over{b_j-b_i}}.\end{equation} 
Again by taking some spectral point $z$ close to $b_i$ and letting $z\rightarrow b_i $ one obtains
\begin{equation} \label{Seq2} S_{ii}= (\lambda F)^\prime (b_i) .\end{equation}
Hence the formulas (\ref{Meq}),(\ref{Meq2}),(\ref{Seq}),(\ref{Seq2}), well known in the model reduction community \cite{AnLeSa}, provide the mass and stiffness matrices directly from the data. Similar formulas can be derived for other forms of spectral data. Together $S$ and $M$ provide a reduced order model for (\ref{1d1side}). 

The solution $u_G$ to (\ref{Gvarform1d}) also has the property that at $x=0$, as a function of $\lambda$ it matches the data \begin{equation}\label{hermform} \{ F(b_j), F^\prime (b_j) : j=1,\ldots, m\}\end{equation} exactly. We show this in the following Lemma, which was previously known \cite{AnLeSa}. 
\begin{lem}\label{implem}
Let $u_G(x;\lambda)$ be the Galerkin solution to (\ref{Gvarform1d}) for subspace $V_m$ generated by exact solutions corresponding to $\lambda= b_1,\dots, b_m$.  Let  $F_m$ given by \begin{equation}\label{ratform} F_m(\lambda)= \sum_{i=1}^m \frac{y_i^2}{\lambda-\theta_i} \end{equation} be the unique rational Hermite interpolant of form (\ref{ratform}) to $F(\lambda)$  matching the data (\ref{hermform}) with positive residues and negative poles. We then have 
$$u_G(0;\lambda) = F_m(\lambda) $$
for all $\lambda$. 
\end{lem}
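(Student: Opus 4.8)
The plan is to produce a closed‑form rational expression for the Galerkin response $u_G(0;\lambda)$, to check that it has the structured form (\ref{ratform}) with positive residues and negative poles, to verify directly that it Hermite‑interpolates the data (\ref{hermform}) at the sample points $b_j$, and then to appeal to uniqueness of such an interpolant.

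First, combining (\ref{galROM}) with the identity $u_G(0;\lambda)=\sum_{i=1}^m c_i u_i(0)=\vec{c}^{\,\top}\vec{F}$ gives the closed form
$$u_G(0;\lambda)=\vec{F}^{\,\top}(S+\lambda M)^{-1}\vec{F}.$$
For generic sample points the exact solutions $u_1,\dots,u_m$ are linearly independent, so the Gram matrix $M$ is symmetric positive definite and $S$ is symmetric; simultaneously diagonalizing the pencil $(S,M)$ by a matrix $W$ with $W^\top M W=I$ and $W^\top S W=\mathrm{diag}(-\theta_1,\dots,-\theta_m)$ turns this into
$$u_G(0;\lambda)=\sum_{i=1}^m\frac{y_i^2}{\lambda-\theta_i},\qquad y_i:=(W^\top\vec{F})_i,$$
which is precisely the form (\ref{ratform}), with nonnegative residues. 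Since $v^\top S v=\int_0^1(u_v')^2+\int_0^1 q\,u_v^2$ for $u_v=\sum_i v_i u_i\in H^1((0,1))$, positivity of the Schr\"odinger operator under the standing assumptions makes the pencil positive definite, so each $-\theta_i>0$, i.e.\ the poles $\theta_i$ are negative; for generic $b_j$ the $\theta_i$ are distinct and the $y_i$ are nonzero, so the residues are strictly positive.

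For the interpolation conditions, note that since $u_j\in V_m$ is the exact solution of (\ref{varform1d}) at $\lambda=b_j$, it satisfies the Galerkin equations (\ref{Gvarform1d}) at $\lambda=b_j$ and hence, the Galerkin problem having a unique solution, coincides with $u_G(\cdot;b_j)$; its coefficient vector is therefore the $j$‑th coordinate vector $e_j$, consistent with the identity $(S+b_jM)e_j=\vec{F}$ that follows from (\ref{varsys2}) and the symmetry of $S,M$. In particular $u_G(0;b_j)=u_j(0)=F(b_j)$. Differentiating $(S+\lambda M)\vec{c}(\lambda)=\vec{F}$ in $\lambda$ gives $\frac{d\vec{c}}{d\lambda}=-(S+\lambda M)^{-1}M\vec{c}(\lambda)$, and hence $\frac{d}{d\lambda}u_G(0;\lambda)=\vec{F}^{\,\top}\frac{d\vec{c}}{d\lambda}=-\vec{c}(\lambda)^\top M\,\vec{c}(\lambda)$; evaluated at $\lambda=b_j$ with $\vec{c}(b_j)=e_j$ this equals $-M_{jj}=F'(b_j)$ by (\ref{Meq2}). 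Thus $u_G(0;\cdot)$ matches all $2m$ Hermite data (\ref{hermform}).

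Finally, $u_G(0;\cdot)$ and $F_m$ are both rational functions of the form (\ref{ratform}) with positive residues and negative poles that match the data (\ref{hermform}); by the uniqueness of such an interpolant \cite{AnLeSa} they coincide, which is the assertion of the lemma. The main work beyond bookkeeping is the structural part of the second paragraph: showing that the pencil $(S,M)$ is positive definite, so that the poles are genuinely negative, and that the partial‑fraction representation is nondegenerate with exactly $m$ nonzero residues. This is exactly what licenses identifying $u_G(0;\cdot)$ with the structured interpolant $F_m$ rather than with some arbitrary rational interpolant, and it is where the standing assumptions on $q$ and the genericity of the $b_j$ enter.
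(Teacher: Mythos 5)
Your proof is correct and follows the same architecture as the paper's: establish that $u_G(0;\cdot)$ is a rational function of the structured form (\ref{ratform}), verify the $2m$ Hermite conditions at the $b_j$, and invoke uniqueness of the interpolant. The differences are essentially cosmetic --- you diagonalize the matrix pencil $(S,M)$ and differentiate the resolvent, where the paper expands in $L^2$-orthonormal Galerkin eigenfunctions and obtains the derivative as a limit of difference quotients via a reciprocity identity --- and your discussion of positivity of the pencil and nondegeneracy of the residues is at the same level of rigor as the paper's bare assertion of negative poles and positive residues.
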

\begin{proof}
First we show that the Galerkin solution yields a response at $x=0$ of the form (\ref{ratform}) for all $\lambda$. 
Here we use the continuous $L^2(0,1)$ inner product $\langle, \rangle$. Decomposing the solution $u_G$ into an orthonormal basis of Galerkin eigenpairs $ \{ w_i , \mu_i \} $,  
$$ u_G = \sum_{i=1}^m \langle u_G, w_i \rangle w_i ,$$ we have \begin{equation} \label{Gimpform} u_G(0;\lambda) = \sum_{i=1}^m \langle u_G, w_i \rangle w_i (0). \end{equation} 
The fact that $ \{ w_i , \mu_i \} $ is an eigenpair means that 
$$ \langle u_G, w_i\rangle = -{1\over{\mu_i}} \left( \int w_i^\prime u_G^\prime +\int q w_i u_G \right) $$
which from the variational formulation for $u_G$ gives 
$$ \langle u_G, w_i\rangle ={1\over{\mu_i}} \langle w_i, \lambda u_G \rangle -{1\over{\mu_i}} w_i(0) .$$ 
Solving for $ \langle u_G, w_i\rangle$ we have
$$ \langle u_G, w_i\rangle ={w_i(0)\over{\lambda-\mu_i}} $$ which from (\ref{Gimpform}) yields 
$$ F_G(\lambda) := u_G(0;\lambda) = \sum_{i=1}^m  {w^2_i (0)\over{\lambda -\mu_i}}.$$
Hence $u_G(0;\lambda)$ has the same form as  $F_m(\lambda)$ in  (\ref{ratform}), with positive residues and negative poles. Recall that $F_m$ is the rational Hermite interpolant to $F$ at the points $\lambda= b_1,\ldots, b_m $.  Now, since the trial space for the Galerkin solution contains the exact solutions for $\lambda = b_1\ldots b_m $, $u_G$ will be the exact solution at those spectral points, which means that at $\lambda=b_i$,
$$u_G(0;b_i) = F(b_i) = F_m(b_i) .$$
We claim $F_G(\lambda):=u_G(0;\lambda)$ also matches the derivatives at the spectral points as a function of $\lambda$.  The Galerkin solution $u_z:=u_G(0,z)$ for some $\lambda=z$ near $b_i$ can be used as a test function for $u_i$, so that one has  
$$ F_G(z) = u_z(0)= \int u_i^\prime u_z^\prime +\int q u_i u_z + b_i \langle u_i, u_z\rangle .$$
We can also use the variational formulation for this same Galerkin solution $u_z$ for $\lambda= z$ with $u_i$ as a test function to obtain
$$F_G(b_i)= u_i(0) =\int u_z^\prime u_i^\prime +\int q u_z u_i + z \langle u_z, u_i\rangle ,$$
and by subtracting them we have
$$ F_G(z)- F_G(b_i) = (b_i-z)  \langle u_z, u_i\rangle.$$
Taking the limit as $z\rightarrow b_i$ one obtains
$$F_G^\prime(b_i) =- \lim_{z\rightarrow b_i} \langle u_z, u_i\rangle = -\langle u_i, u_i\rangle = F^\prime (b_i) $$ 
from (\ref{Meq2}).  Hence $F_G(\lambda)= u_G(0;\lambda)$ is the rational Hermite interpolant to $F$ at the given spectral points, and so by uniqueness of this Pad\'e approximation, 
$$ u_G(0;\lambda) = F_m(\lambda) $$
for all $\lambda$. 
\end{proof}

Spectral Galerkin models such as the above are difficult to work with since they tend to yield full matrices. In particular for inverse problems, it is not obvious how one may extract the coefficients, since we wouldn't know the basis functions $\{ u_i \} $ internally from the data. It is for these reasons we follow along the lines of \cite{BoDrMaZa},\cite{DrMaZa} and do a Lanczos orthogonalization to construct a new orthonormal basis for $V_m$  in which the operator $S$ becomes tridiagonal. In the new basis, $M$ is the identity from the orthonormality. 

\subsection{Lanzcos orthogonalization and equivalence with spectrally matched finite difference grids.}
The original idea of spectrally matched grids \cite{DrKn} was that, given receiver data in the spectral domain, choose a rational approximation for this data, then find the three point finite difference stencil whose forward solution at the receiver matches exactly this rational approximation. 
This yielded a very special nonuniform grid and spectral convergence at the receiver. Higher order convergence for the forward model was sacrificed elsewhere in domain, but this was irrelevant for inversion. It was also shown at the time that the receiver response of the finite difference scheme was equivalent to that of a spectral Galerkin method, for various types of spectral data \cite{DrMo}.  Just as above we saw how to generate a Galerkin model from data, one can generate a finite difference model from the same data. 

Suppose we read the data (\ref{hermform}) as before. 
Forgetting for now about the Galerkin method, there is a unique rational approximation $F_m(\lambda)$ to $F(\lambda)$ of the form  (\ref{ratform})
with positive residues and negative poles which gives this Hermite interpolant to $F(\lambda)$, matching the data (\ref{hermform}). From these positive residues and negative poles, by using what is essentially the Lanczos algorithm, one can find a continued fraction form for $F_m(\lambda)$,
\begin{equation*}
F_m(\lambda) = \cfrac{1}{\hat{\gamma}_1\lambda +\cfrac{1}{\gamma_1 +\cfrac{1}{\hat{\gamma}_2\lambda +\cdots\cfrac{1}{\gamma_{m-1}+\cfrac{1}{\hat{\gamma}_m\lambda  }}}}} .
\end{equation*}
Then from there one can extract $\{ \gamma_i, \hat{\gamma}_i \} $ which define a three-point staggered difference scheme with tridiagonal matrix $L_m$  for which the approximated solution at $x=0$ is exactly $F_m(\lambda)$  \cite{DrKn}.  That is, the rational approximation $F_m(\lambda)$ uniquely determines positive $\gamma_j, \hat{\gamma}_j$, such that solving the finite difference scheme 
\begin{eqnarray}\label{finitediff} -{1\over{\hat{\gamma}_j}} \left( {U_{j+1}-U_j\over{\gamma_j}}-  { U_j -U_{j-1} \over{\gamma_{j-1}}}\right) +\lambda U_j &=& 0 \ \ \ \mbox{for}\ \ j=1,\ldots m \\  - {U_1- U_0\over{\gamma_0}} =1, & &  {U_{m+1}-U_m\over{\gamma_m}}=0 \nonumber\end{eqnarray}
yields $$U_1= F_m(\lambda).$$ Note that $U_0$ and $U_{m+1}$ are ghost points only (and $\gamma_0,\gamma_m$ ghost grid steps) and the nonzero Neumann condition on the left yields a nonzero right hand side in the first component. That is, this yields an $m$ dimensional matrix system of the form
$$(  L_m +\lambda I )\vec{U} = {\vec{e}_1\over{{\hat{\gamma}_1}}}$$
for the unknown $U_1,\ldots, U_m$, with entries of $L_m$ given by (\ref{finitediff}) in terms of the $\gamma_j$ and $\hat{\gamma}_j$.  
\begin{remark} When $q=0$, the $\gamma_j, \hat{\gamma}_j$ are primary and dual finite difference grid steps.  For nonzero $q$, finite difference scheme (\ref{finitediff}) can be transformed to a discrete Schr\"odinger form with a discrete Liouville transform \cite{BoGuMa}. For simplicity, we won't do that here, since we don't use the finite difference framework for reconstructions. \end{remark}

Consider again approximating the solution to  (\ref{1d1side}) by a Galerkin method with the finite dimensional subspace $V_m$, generated by the
exact solutions $\{ u_i\}$ at the spectral points $\lambda = b_i$. 
Since the finite difference grid does not easily extend to higher dimensions, we would rather work with Galerkin systems. Recall the Galerkin system (\ref{galROM}) 
$$(S+\lambda M) \vec{c}= \vec{F} $$
where $\vec{F} = (F(b_1), \dots, F(b_m) )^\top $ and the vector $\vec{c}=( c_1, \ldots c_m)^\top $ 
contains the coefficients for solution $$ u_G= \sum c_i u_i .$$
Unfortunately, unlike the finite difference model, the stiffness and mass matrices $S$ and $M$ are full.
However, one can in fact find a new basis for $V_m$ in which the Galerkin system is tridiagonal, and {\it exactly everywhere} the same as the symmetrization of the finite difference system (\ref{finitediff}). 
To do this we will do Lanczos orthogonalization. Define $\delta_{m}\in V_m$ to be the projection of the delta function onto $V_m$,
that is, the unique element of $V_m$ which satisfies \begin{equation}\langle \delta_m , w\rangle = w(0) \ \ \ \mbox{for all} \ \ \ w\in V_m, \nonumber \end{equation} 
identified with the coefficient vector $\vec{d} = (d_1,d_2,\ldots d_m )$. Note that $\vec{d}=M^{-1}\vec{F}.$ Consider now the basis where we let $A=M^{-1}S$ and 
%$$K= M^{-1/2} A M^{-1/2}$$ and
%$$B=\{  \vec{\delta}, K\vec{\delta}, K^2\vec{\delta}, \ldots, K^{n-1}\vec{\delta} \}.$$
$$B=\{  \vec{d}, A\vec{d}, A^2\vec{d}, \ldots, A^{m-1}\vec{d} \}.$$
and we orthogonalize using Gram-Schmidt with respect to the mass matrix $M$ inner product 
$$\langle \vec{x},\vec{y} \rangle_M := \langle M\vec{x},\vec{y}\rangle .$$ This will yield continuous $L^2$ orthogonality of the basis functions. Note that $A$ is symmetric with respect to this inner product. 
We get new orthogonalized basis for $V_m$: 
\begin{equation}\label{orthogbasis} V_m=\mbox{span}\{ \hat{u}_1(x), \hat{u}_2(x),\ldots \hat{u}_m(x) \}. \end{equation}
Due to the orthogonalization, the new basis is somewhat localized, more refined near zero and coarsening out towards one, as spectrally matched finite difference grid steps do. See an example in Figure \ref{basisfig} for $m=6$. Furthermore, $A$ is tridiagonal in the new basis, and since $M$ becomes the identity, this means that $S$ is also tridiagonal in the new basis.

\begin{figure}
\centering
%\vskip-1in
\includegraphics[scale=.5]{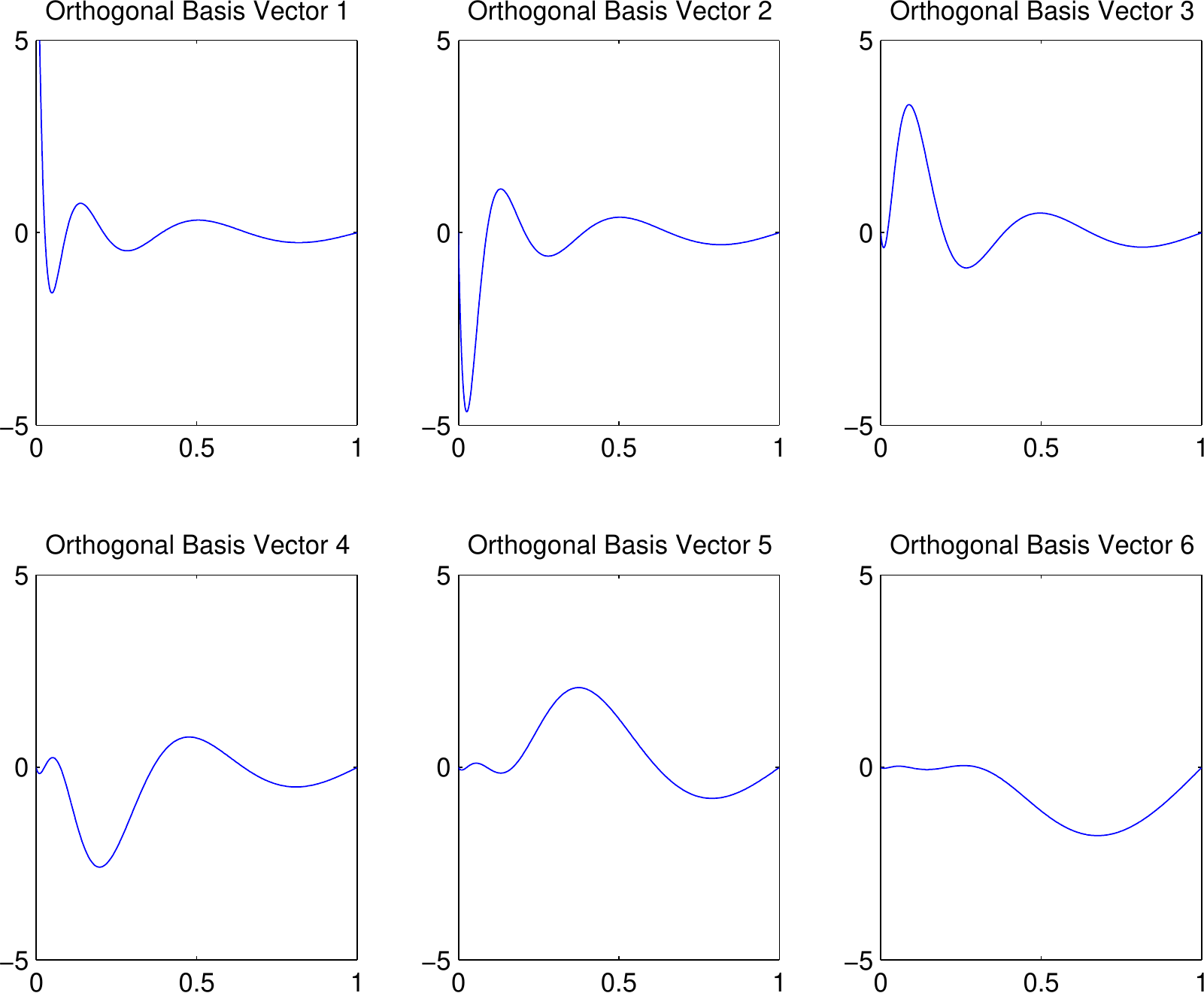}
%\vskip-1in
%\includegraphics[width=.45\textwidth, height= 4in]{basisPerturbed10earlier.pdf}
\caption{New basis $\hat{u}_1,\ldots,\hat{u}_m$ in which the Galerkin system becomes a tridiagonal finite difference system. Note the localization of the basis functions, with peaks refining near zero and coarsening out towards 1.}
\label{basisfig}\end{figure}

\begin{thm} If we change the basis to the orthogonalized (\ref{orthogbasis}) and form the Galerkin system in this new basis
\begin{equation} (\hat{S}+\lambda \hat{M})\vec{\hat{c}}=\vec{\hat{F}}\label{galsyshat} \end{equation}
(with $\hat{M}=I$) to solve for $\vec{\hat{c}}$, this is the symmetrization of the finite difference system (\ref{finitediff}) for $\vec{U}$. In particular,  
 $$u_G =\sum_{i=1}^m \sqrt{\hat{\gamma_i}} U_i \hat{u}_i(x).$$
\end{thm}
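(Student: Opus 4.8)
The plan is to show that, after a single diagonal rescaling, the Galerkin system (\ref{galsyshat}) and the finite-difference system (\ref{finitediff}) become one and the same symmetric tridiagonal problem --- the Jacobi problem attached to the rational function $F_m(\lambda)$ --- and then to read off the formula for $u_G$ by matching the two solution vectors component by component.

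First I would unpack the structure of (\ref{galsyshat}). Writing $A=M^{-1}S$, Gram--Schmidt on $B=\{\vec d,A\vec d,\dots,A^{m-1}\vec d\}$ in the $\langle\cdot,\cdot\rangle_M$ inner product is exactly the Lanczos iteration for the $M$-self-adjoint operator $A$ with starting vector $\vec d$, so it produces an $M$-orthonormal basis $\hat u_1,\dots,\hat u_m$ in which $A$, and hence (since $\hat M=I$) also $\hat S=\hat M\hat A$, is symmetric tridiagonal and positive definite because $S$ is --- which is exactly the hypothesis that $F_m$ has negative poles. Since $\vec d$ is a scalar multiple of the first Lanczos vector, the new-basis coordinates of $\delta_m$ (whose old-basis coordinates are $\vec d=M^{-1}\vec F$) are a multiple of $\vec e_1$; equivalently $\hat u_i(0)=\langle\delta_m,\hat u_i\rangle=0$ for $i\ge 2$ and $\hat u_1(0)=\|\vec d\|_M=\sqrt{\langle\delta_m,\delta_m\rangle}=:\beta$, so the right-hand side $\hat F_j=\hat u_j(0)$ of (\ref{galsyshat}) is $\beta\,\vec e_1$. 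Solving (\ref{galsyshat}) then gives $u_G(0;\lambda)=\hat u_1(0)\hat c_1(\lambda)=\beta^2[(\hat S+\lambda I)^{-1}]_{11}$, and by Lemma~\ref{implem} this equals $F_m(\lambda)$. Comparing residue sums --- those of $[(\hat S+\lambda I)^{-1}]_{11}$ sum to $\|\vec e_1\|^2=1$, those of $F_m$ to $\sum_i y_i^2$ --- forces $\beta^2=\sum_i y_i^2=:\rho_0$, so that $[(\hat S+\lambda I)^{-1}]_{11}=F_m(\lambda)/\rho_0=\int d\rho(t)/(\lambda-t)$ for $\rho$ the probability measure with atoms $\rho_0^{-1}y_i^2$ at the negative points $\theta_i$.

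Next I would treat the finite-difference side. A short computation with the stencil in (\ref{finitediff}) shows that conjugating $L_m$ by $D=\mathrm{diag}(\sqrt{\hat\gamma_1},\dots,\sqrt{\hat\gamma_m})$ symmetrizes it, turning the system into $(\tilde L_m+\lambda I)(D\vec U)=\vec e_1/\sqrt{\hat\gamma_1}$ with $\tilde L_m:=DL_mD^{-1}$ symmetric tridiagonal. Using $U_1=F_m(\lambda)$ together with the large-$\lambda$ asymptotics $F_m(\lambda)\sim\rho_0/\lambda$ and, from the continued fraction, $F_m(\lambda)\sim 1/(\hat\gamma_1\lambda)$, one gets $\rho_0=1/\hat\gamma_1$ and hence $[(\tilde L_m+\lambda I)^{-1}]_{11}=\hat\gamma_1 F_m(\lambda)=F_m(\lambda)/\rho_0$ --- the very same function as on the Galerkin side. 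At this point $\hat S$ and $\tilde L_m$ are two symmetric tridiagonal matrices with identical $(1,1)$-resolvent functions, i.e. both are the Jacobi matrix of the orthonormal polynomials for the measure $\rho$ with $\int d\rho(t)/(\lambda-t)=F_m(\lambda)/\rho_0$; invoking uniqueness of the inverse Jacobi problem (the recurrence coefficients depend only on $\rho$) I would conclude $\hat S=\tilde L_m$, after fixing the off-diagonal signs using the residual $\pm$ freedom in the $\hat u_i$ --- an alternating sign choice that can be arranged to leave $\hat u_1$, hence $\beta=1/\sqrt{\hat\gamma_1}$ and $\vec{\hat F}$, unchanged. Then (\ref{galsyshat}) reads $(\tilde L_m+\lambda I)\vec{\hat c}=\vec e_1/\sqrt{\hat\gamma_1}$, so $\vec{\hat c}=D\vec U$, i.e. $\hat c_i=\sqrt{\hat\gamma_i}\,U_i$, and $u_G=\sum_i\hat c_i\hat u_i=\sum_i\sqrt{\hat\gamma_i}\,U_i\,\hat u_i$.

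The hard part, I expect, is the last identification: carefully invoking uniqueness for the inverse Jacobi/moment problem and handling the off-diagonal sign convention, which is pinned down only by the sign freedom in the orthonormal basis functions. A closely related nuisance is the normalization bookkeeping underlying the earlier steps --- verifying that $\|\vec d\|_M^2$, $\langle\delta_m,\delta_m\rangle$, $\hat u_1(0)^2$, $\sum_i y_i^2$ and $1/\hat\gamma_1$ are all the same constant --- since it is exactly this matching of constants that upgrades an ``equal up to rescaling'' statement into the exact identity claimed by the theorem.
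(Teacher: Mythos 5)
Your proposal is correct and follows essentially the same route as the paper's proof: symmetrize the finite-difference system by the diagonal $\sqrt{\hat\gamma_j}$ scaling, observe that the Lanczos basis makes $\hat S$ symmetric tridiagonal with right-hand side a multiple of $\vec e_1$, match the two $(1,1)$-resolvent (impedance) functions via Lemma~\ref{implem} and $U_1=F_m(\lambda)$, and invoke uniqueness of the inverse spectral problem for Jacobi matrices to conclude $\hat S=\tilde L_m$ and $\vec{\hat c}=D\vec U$. Your treatment is somewhat more explicit than the paper's on the normalization bookkeeping (identifying $\hat u_1(0)^2=\sum_i y_i^2=1/\hat\gamma_1$ via residue sums and large-$\lambda$ asymptotics) and on the off-diagonal sign convention, both of which the paper compresses into the phrase ``by the normalization of the residues \ldots the multipliers from right hand sides are the same.''
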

\begin{proof}
We first symmetrize (\ref{finitediff}) by setting $$V_j =\sqrt{\hat{\gamma}_j} U_j$$ and and then multiplying each row $i$ by
 $\sqrt{\hat{\gamma}_i}$ in the resulting equations for the $\{ V_i \} $. 
The resulting system is in the form $$( L +\lambda I)\vec{V}= {1\over{\sqrt{\hat{\gamma}_1}}}\vec{e}_1,$$ where $L$ is now symmetric.  We recall that for the given impedance of the form (\ref{ratform}) (now multiplied by $\sqrt{\hat{\gamma}_1}$), the symmetric tridiagonal $L$ is uniquely determined.  

\begin{figure}
\centering
\includegraphics[scale=.4]{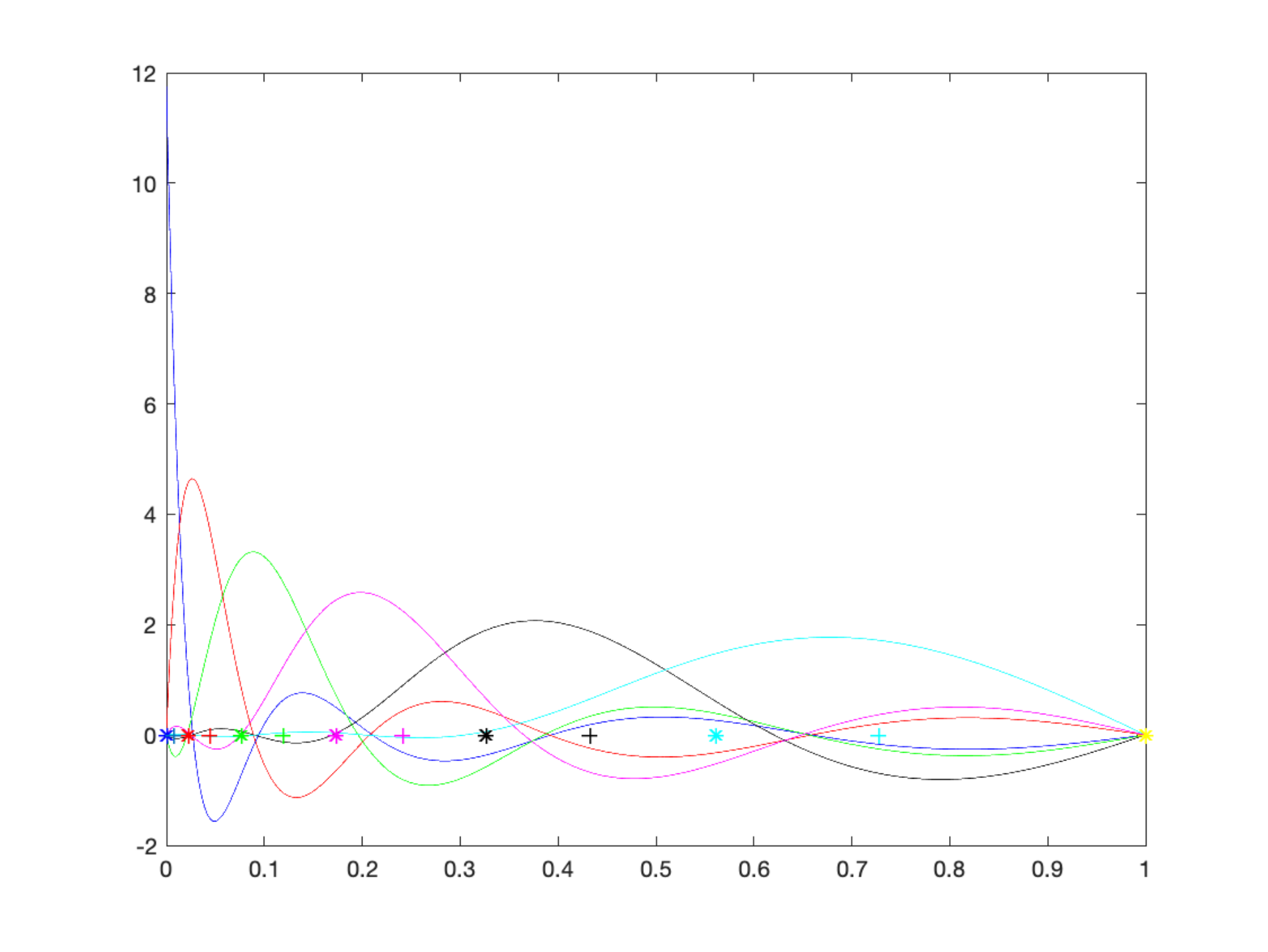}
%\vskip -1in
\caption{Orthogonalized basis plotted with its corresponding finite difference grid. Stars $\ast$ correspond to primary grid points and plus signs $+$ to dual grid points. Primary grid points are color coded with corresponding basis functions. Note the masses of the basis functions are concentrated between each primary point and the next dual point. }
\label{basisandgrid}
\end{figure}

\begin{figure}[t!]
\centering
\includegraphics[scale=.3]{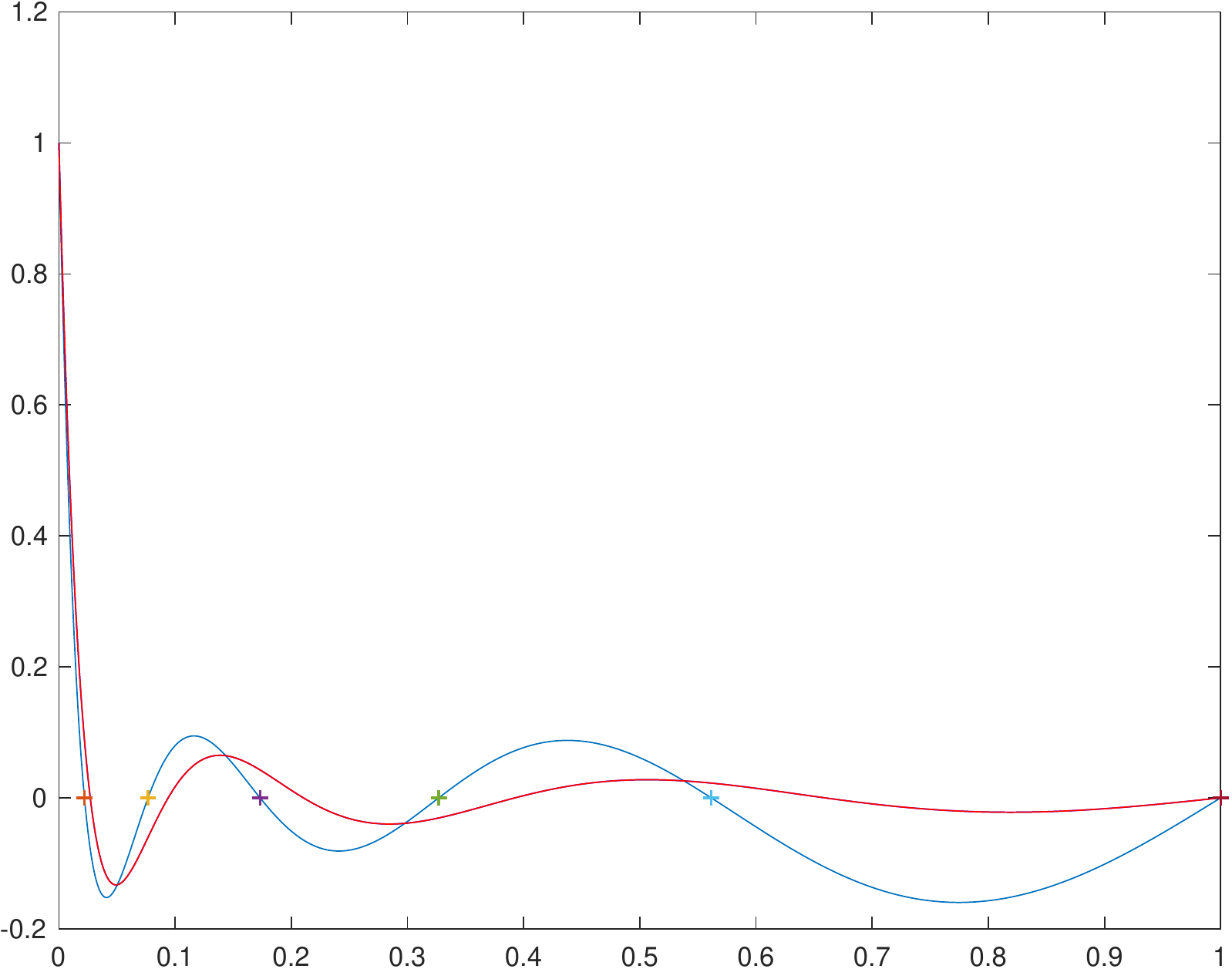}
\includegraphics[scale=.3]{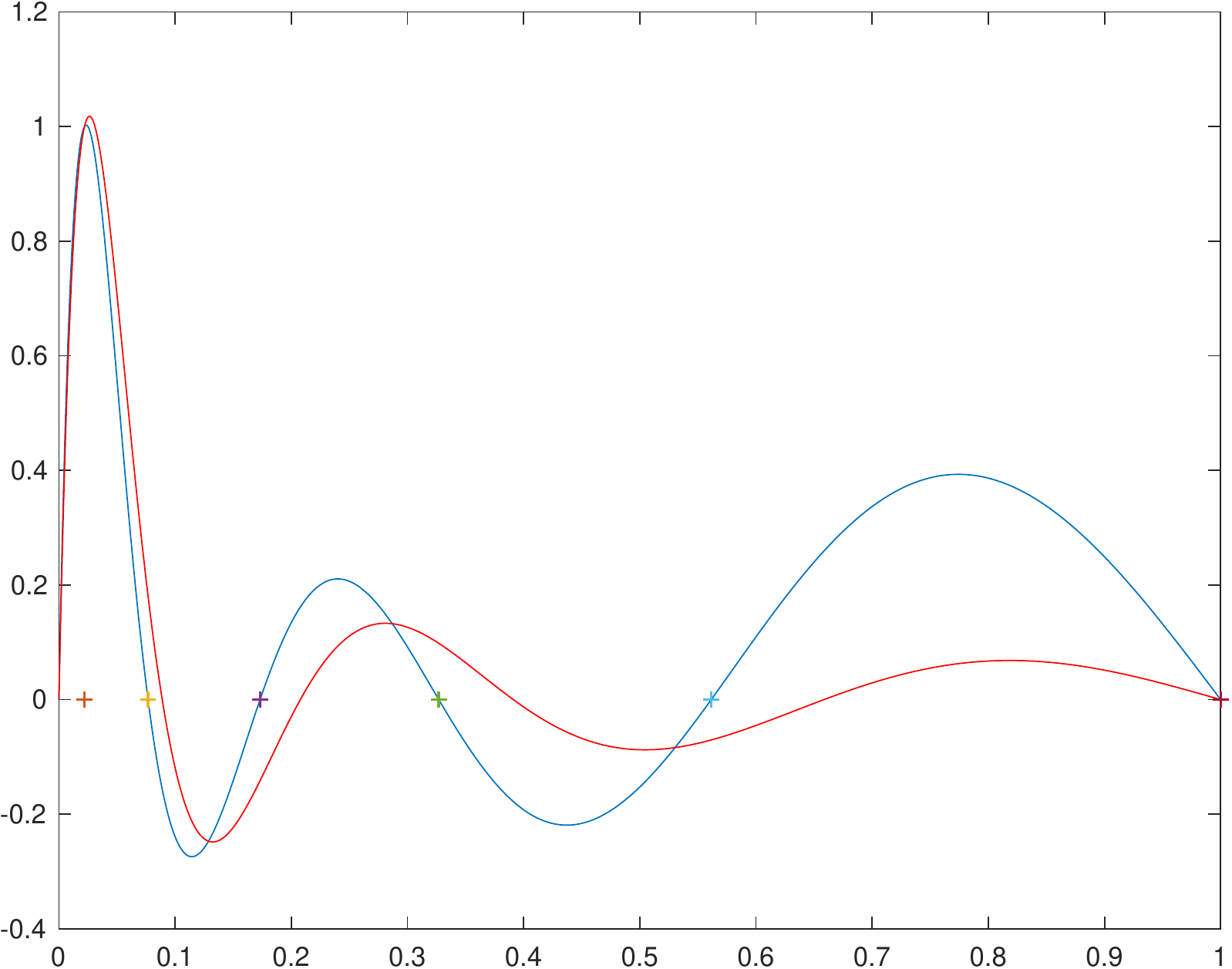}
%\vskip -1.5in
\includegraphics[scale=.3]{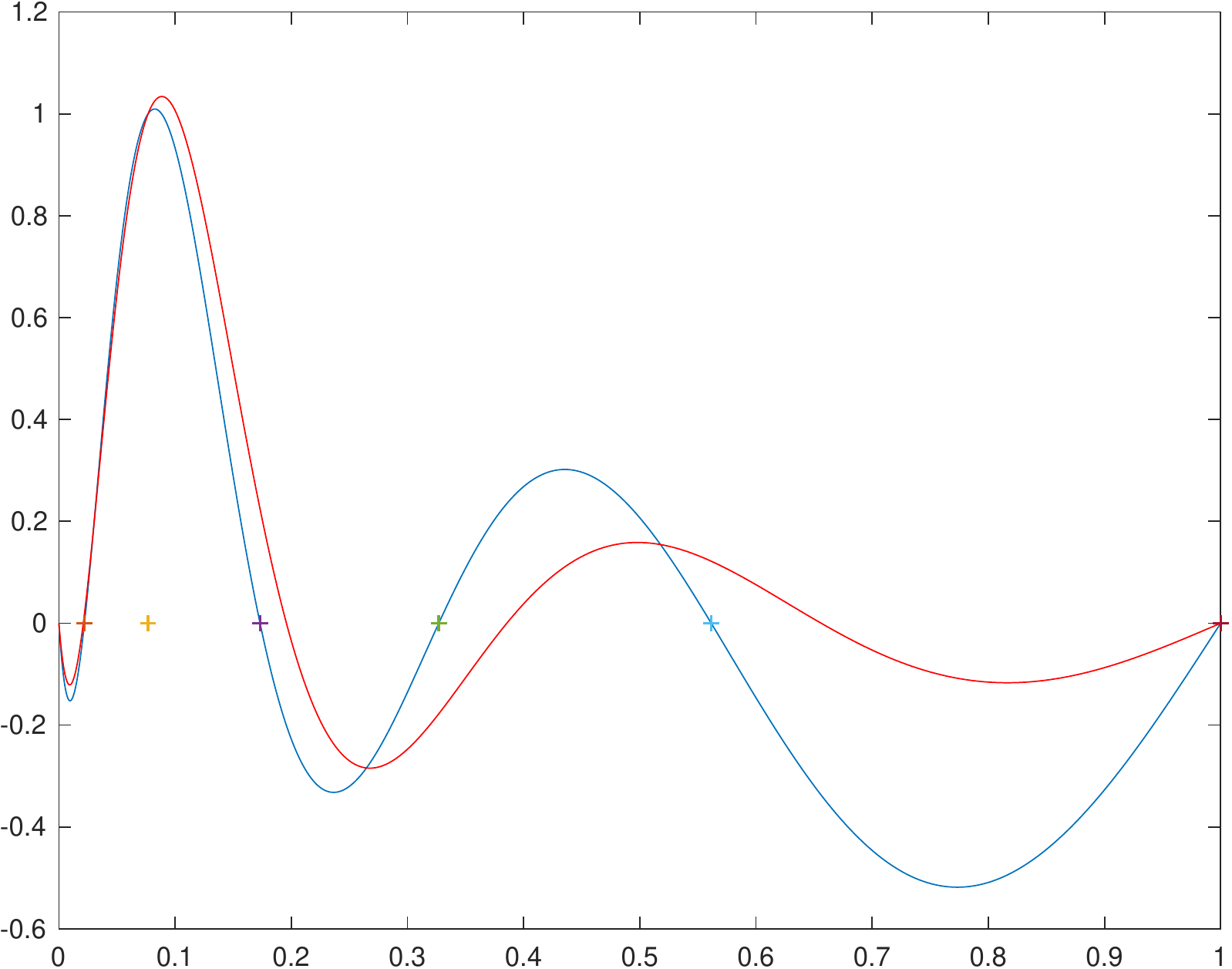}
\includegraphics[scale=.3]{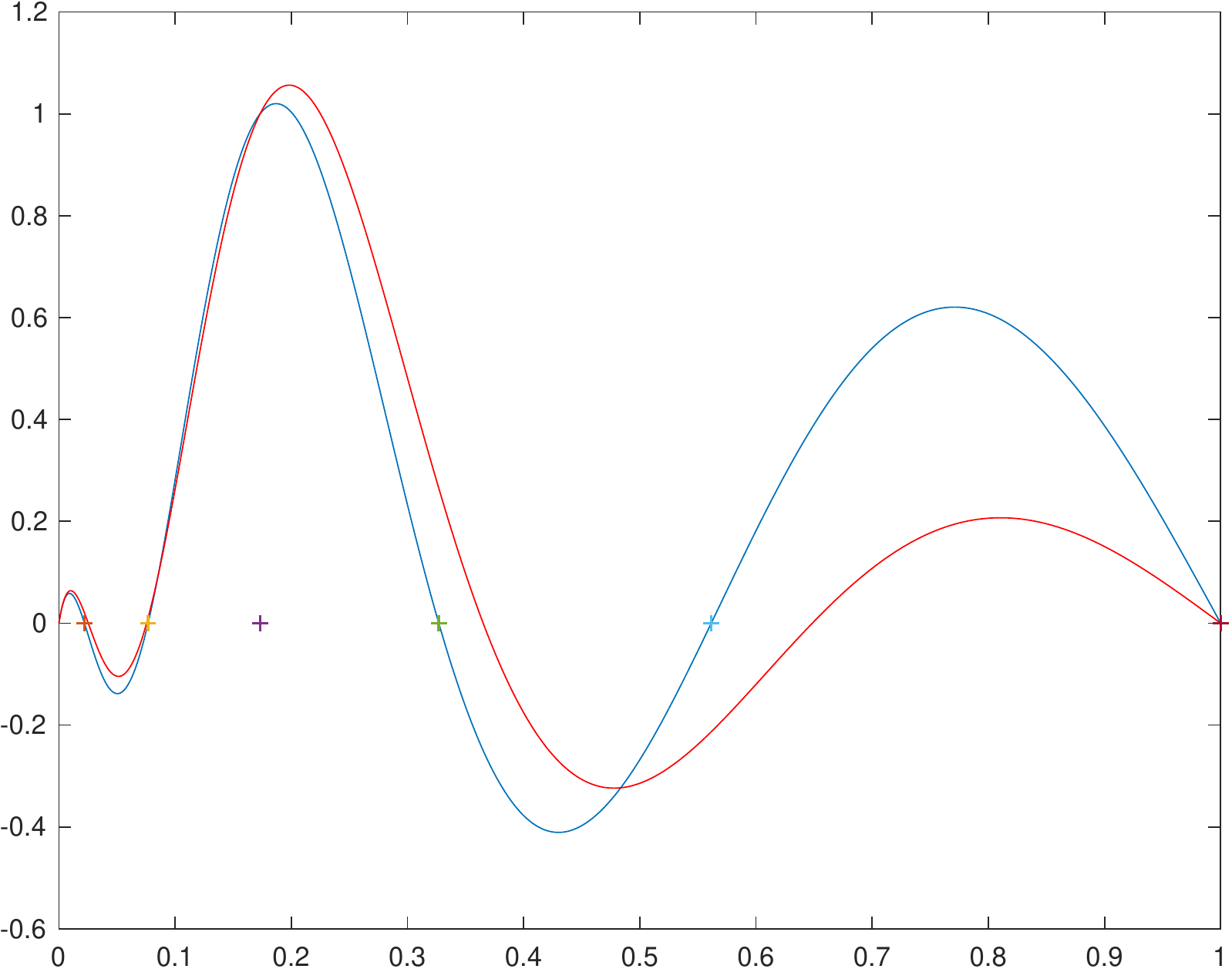}
%\vskip -1.5in
\includegraphics[scale=.3]{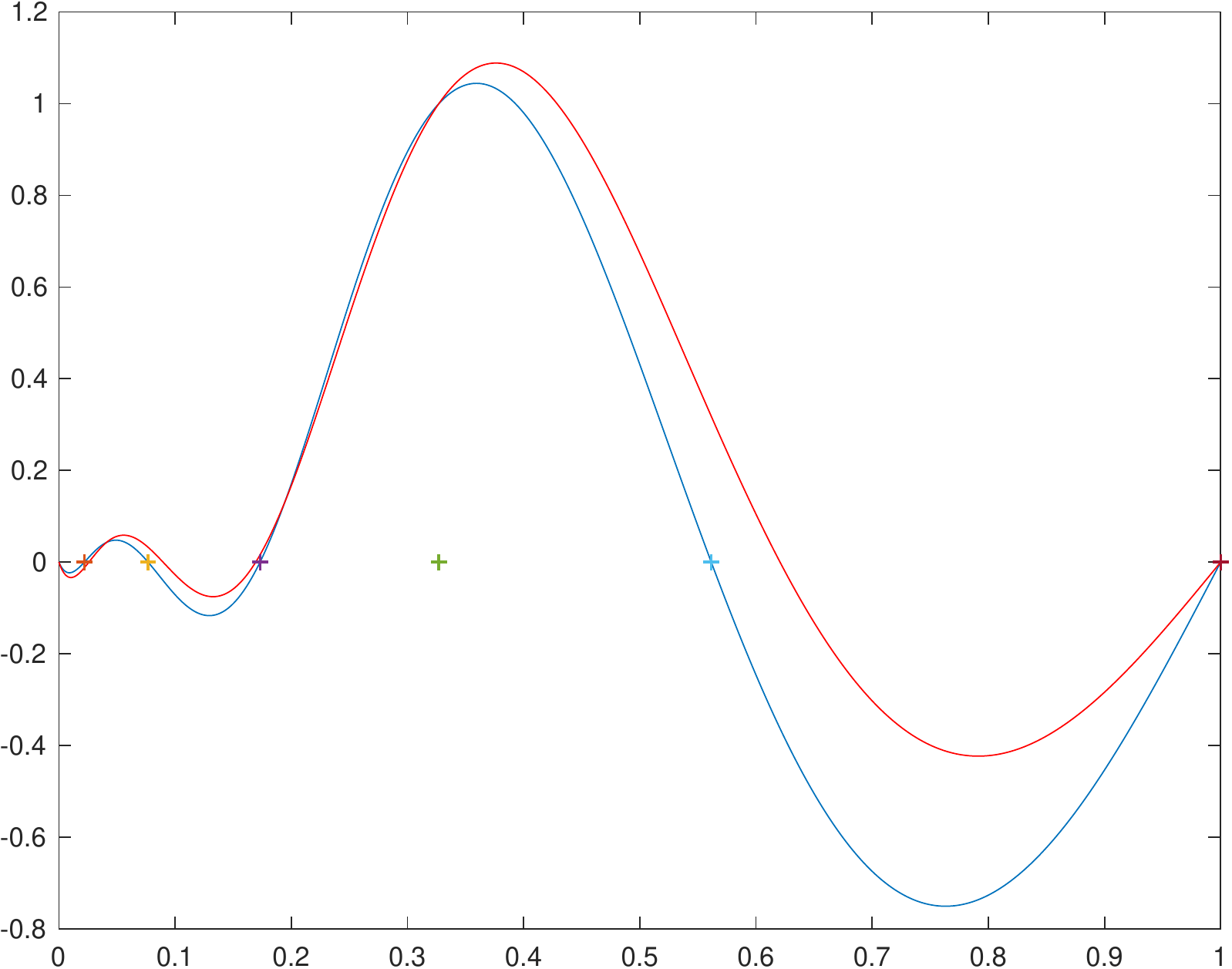}
\includegraphics[scale=.3]{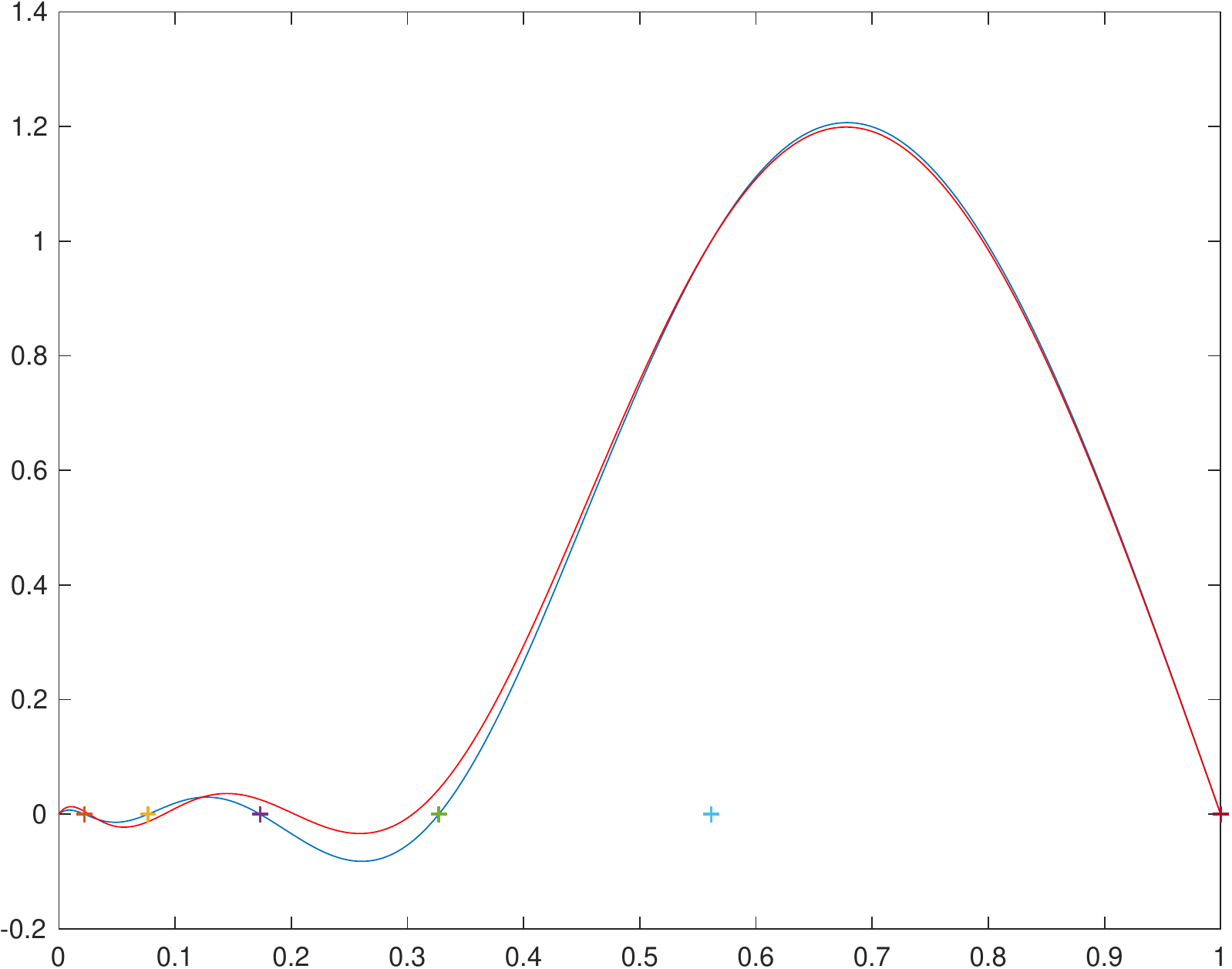}
%\vskip -.7in
\caption{Red: Orthogonalized basis normalized so that $\hat{u}_i(x_i)=1$ where $x_i$ is the $i$th primary grid point.  Blue: Nodal basis $\{ \phi_i \}$  of $V_m$ satisfying $\phi_i(x_j)=\delta_{ij}$. Note that the Lanczos orthogonalized basis resembles the nodal one, but has more localization.  }
\label{basislagrange}
\end{figure}

Consider now the system (\ref{galsyshat}) in the new basis. 
 The mass matrix $\hat{M}=I$ will be identity due to orthogonality. The stiffness matrix $\hat{S}$ will be tridiagonal due to the fact that this is a Lanczos process with polynomials in $A$, ($A$ becomes tridiagonal and therefore $\hat{S}$).  Furthermore, since $\hat{u}_1$ is the projection of the delta function, it will be nonzero at zero, but from orthogonality of the new basis functions we have that $$ \langle \hat{u}_1 , \hat{u}_i \rangle = \hat{u}_i (0) = 0$$ for all $i> 1$. Hence the right hand side $\vec{\hat{F}}$ is nonzero only in the first component and is therefore equal to $\hat{u}_1(0)\vec{e}_1$. This means that the system (\ref{galsyshat}) is of the form  $$( \hat{S} +\lambda I)\vec{\hat{c}}= \hat{u}_1(0)\vec{e}_1$$  for $\hat{S}$ tridiagonal symmetric. We also know from Lemma \ref{implem}  and the construction of the system (\ref{finitediff}) that $ u_G(0;\lambda) = F_m(\lambda) = U_1 $ for all $\lambda$, which means that $$\hat{c}_1 \hat{u}_1(0) = {1\over{\sqrt{\hat{\gamma}_1}}} V_1$$ for all $\lambda$.  By decomposing $\vec{\hat{c}}$  into the normalized eigenpairs of $\hat{S}$ as was done in the proof of Lemma \ref{implem} except now using the standard Euclidean inner product in $\mathbb{R}^m$, one can calculate that 
 $$ \hat{c}_1 = \hat{u}_1(0) \sum_{i=1}^m {z^2_i\over{\lambda-\theta_i}} $$
 for $\theta_i$ the eigenvalues and $z^2_i$ the squares of the first components of the normalized eigenvectors of $\hat{S}$.  Similarly
  $$ V_1 =  {1\over{\sqrt{\hat{\gamma}_1}}} \sum_{i=1}^m {y^2_i\over{\lambda-\mu_i}} $$
  where $\mu_i$ the eigenvalues and $y^2_i$ the squares of the first components of the normalized eigenvectors of $L$. 
 By the above impedance equality for all $\lambda$ and the normalization of the residues, one must have that the poles and residues are equal (and that the multipliers from right hand sides are the same). Therefore $L$ and $\hat{S}$ are the same by the uniqueness of the inverse eigenvalue problem, and $\vec{\hat{c}}= \vec{V}$, from which the result follows. 
\end{proof}
\begin{remark} From the above we also found that $$ \hat{u}_1(0) = {1\over{\sqrt{\hat{\gamma}_1}}}  ,$$
which coincides with $\hat{u}_1$ being a projection of the delta function. \end{remark}
 That is, the solution components $U_j$ of the difference scheme (\ref{finitediff})
can be interpreted as coefficients of the spectrally converging Galerkin solution with respect to the orthogonal basis (\ref{orthogbasis}). See Figure \ref{basisandgrid} for an example of an orthogonalized basis with its equivalent staggered finite difference grid. Note that the masses of the basis functions are concentrated between the primary grid steps and the next dual grid step, and they spread out in the same way that the grid steps coarsen. In Figure \ref{basislagrange}, we normalize each function in the basis to be equal to $1$ at its corresponding primary grid point, and plot it against the nodal basis for the same space. (The nodal basis is the unique basis $\{ \phi_i \}$ of $V_m$ satisfying $\phi_i(x_j)=\delta_{ij}$.) Note that our orthogonal basis resembles the nodal one, but is less oscillatory away from the corresponding grid point. In what follows below, we propose an inversion method using the Lanczos orthogonalized basis, which makes no use of the finite difference grid steps, and is easily generalizable to other geometries.

\subsection{Internal data generation and inversion in one dimension.}
Consider solving
\begin{eqnarray} \label{1d1side2}  -u^{\prime\prime} +q u +\lambda u & = & 0 \ \ \mbox{ on } \ \ \ (0,1) \\   - u^\prime (0) & = & 1 \nonumber \\ u(1) &=& 0 \nonumber 
\end{eqnarray}
as a perturbation of the corresponding reference problem 
\begin{eqnarray} \label{1d1sideref}  -u_0^{\prime\prime} +\lambda u_0 & = & 0 \ \ \mbox{ on } \ \ \ (0,1) \\   -u_0^\prime (0) & = & 1 \nonumber \\ u_0(1) &=& 0. \nonumber 
\end{eqnarray}
Here we use the reference medium $q_0=0$, however, any known reference medium should work the same. We note also that one can have either a Dirichlet or Neumann condition on the right endpoint $x=1$. The one dimensional numerical experiments below were done with a Dirichlet condition, a Neumann condition yielded similar results.

\noindent
\underline{\bf Algorithm}
\begin{enumerate}
\item Read data (here synthetically generated) $F(b_i)$, $F^\prime (b_i)$ for some positive $\lambda= b_1,\ldots b_m$ for the perturbed problem (\ref{1d1side2}), and compute all of the corresponding solutions $u_i^0$ for $i=1,\ldots,m$ for the reference problem.  

\item Use (\ref{Meq}, \ref{Meq2}, \ref{Seq}, \ref{Seq2}) to generate the $m\times m$ stiffness and mass matrices $S$ and $M$ for Galerkin system for the perturbed problem, and similarly compute $S_0$, $M_0$ for the reference problem. 

\item Compute projections of the delta function at zero onto $V_m= \langle u_i \rangle $ by solving for $\vec{d}$ in $$M\vec{d} = \vec{F} $$
where components $F_j=F(b_j)$.  These are the coefficients $d_j$ of projected $\delta$ in the $\{ u_i\}  $ basis for $V_m$. Similarly, we calculate $\vec{d}^0$ by solving $$ M_0\vec{d}^0 =\vec{F}^0,$$ where $(F^0)_j=u^0_j(0)$ is the reference data.  This means that $\{ (d^0)_j \}$ are the coefficients of the projected delta function in the reference basis $\{  u^0_i \} $ for the space  $ V_m^0 = \langle u_i^0 \rangle $. 

\item Set $A=M^{-1} S$, and note that $A$ is symmetric with respect to the inner product $ \langle \vec{x}, \vec{y}\rangle_{M} := M\vec{x}\cdot \vec{y}$, which is same the $L^2$ inner product on the corresponding continuous functions. Perform Lanczos orthogonalization with respect to this inner product on the basis 
$$\{  \vec{d}, A\vec{d}, A^2\vec{d}, \ldots, A^{m-1}\vec{d} \}$$
and let  $\hat{S}$,  $\hat{M}=I$ be the Galerkin stiffness and mass matrices in this new basis $ \{ \hat{u}_i\} $. Note that $\hat{S}$ is tridiagonal.

\item Similarly, set $A_0=M_0^{-1} S_0$ and perform Lanczos orthogonalization with respect to the $M_0$ inner product on the basis 
$$\{  \vec{d}_0, A_0\vec{d}_0, A_0^2\vec{d}_0, \ldots, A_0^{m-1}\vec{d}_0 \}$$ 
to obtain the corresponding orthognalized basis $\{ \hat{u}_i^0\} $ for $V_m^0$.

\item Now choose any spectral value $\lambda$ and solve  $ c= (A+\lambda I)^{-1} \vec{\hat{F}}  $ , so that $u_G=\sum c_i \hat{u}_i $ is the Galerkin approximation in $V_m$ to the solution of (\ref{1d1side2}) . Since we don't know $\hat{u}_i $ from the data, approximate $ u_G $ by $$ \tilde{u} = \sum c_i \hat{u}_i^0.$$

\item Compute  $(\tilde{u}^{\prime\prime} -\lambda \tilde{u})/\tilde{u} \approx q $; or reserve $\tilde{u}$ for another use. 

\end{enumerate}

\begin{figure}
\centering
%\vskip-.7in
\includegraphics[scale=.3]{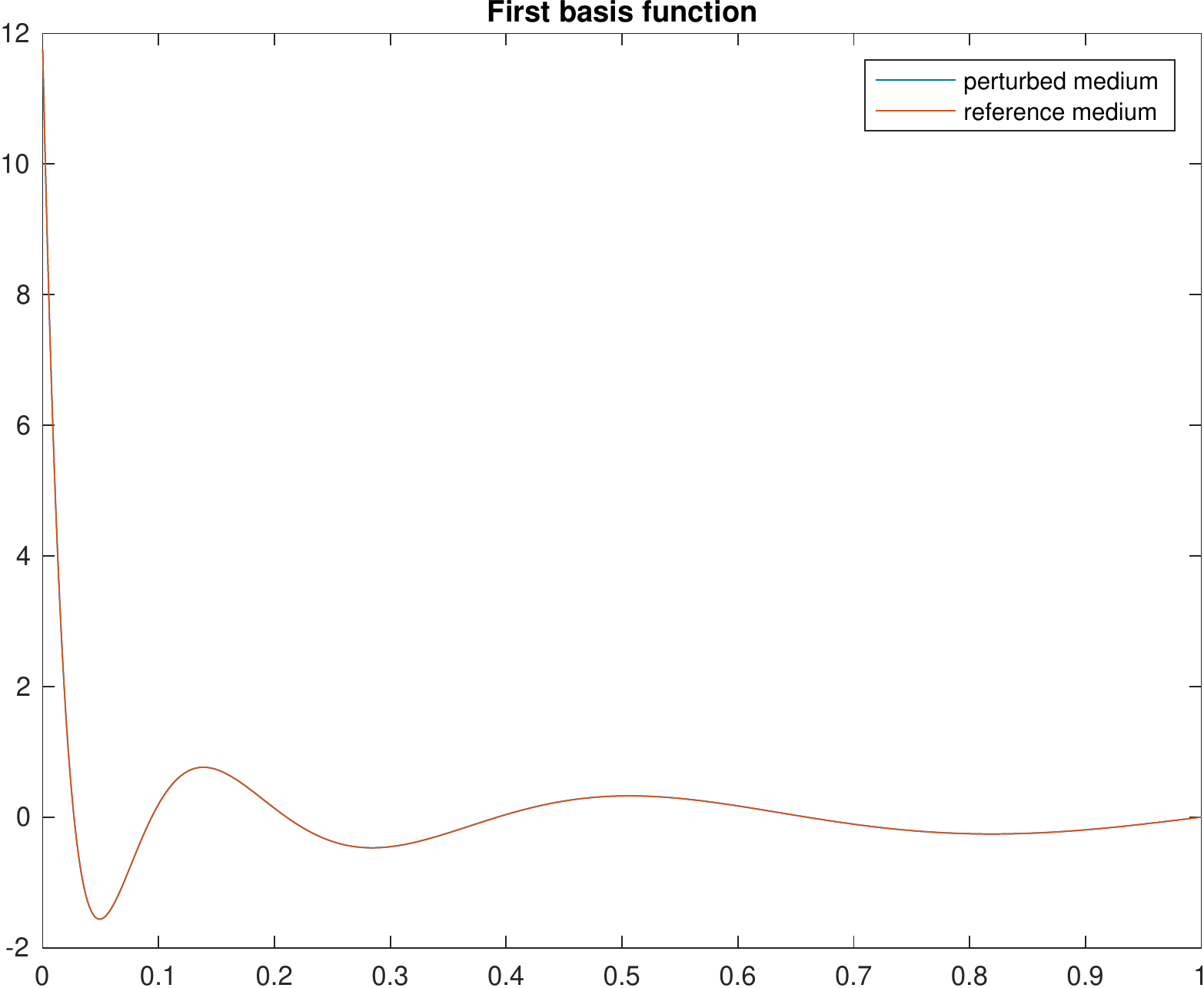}
\includegraphics[scale=.3]{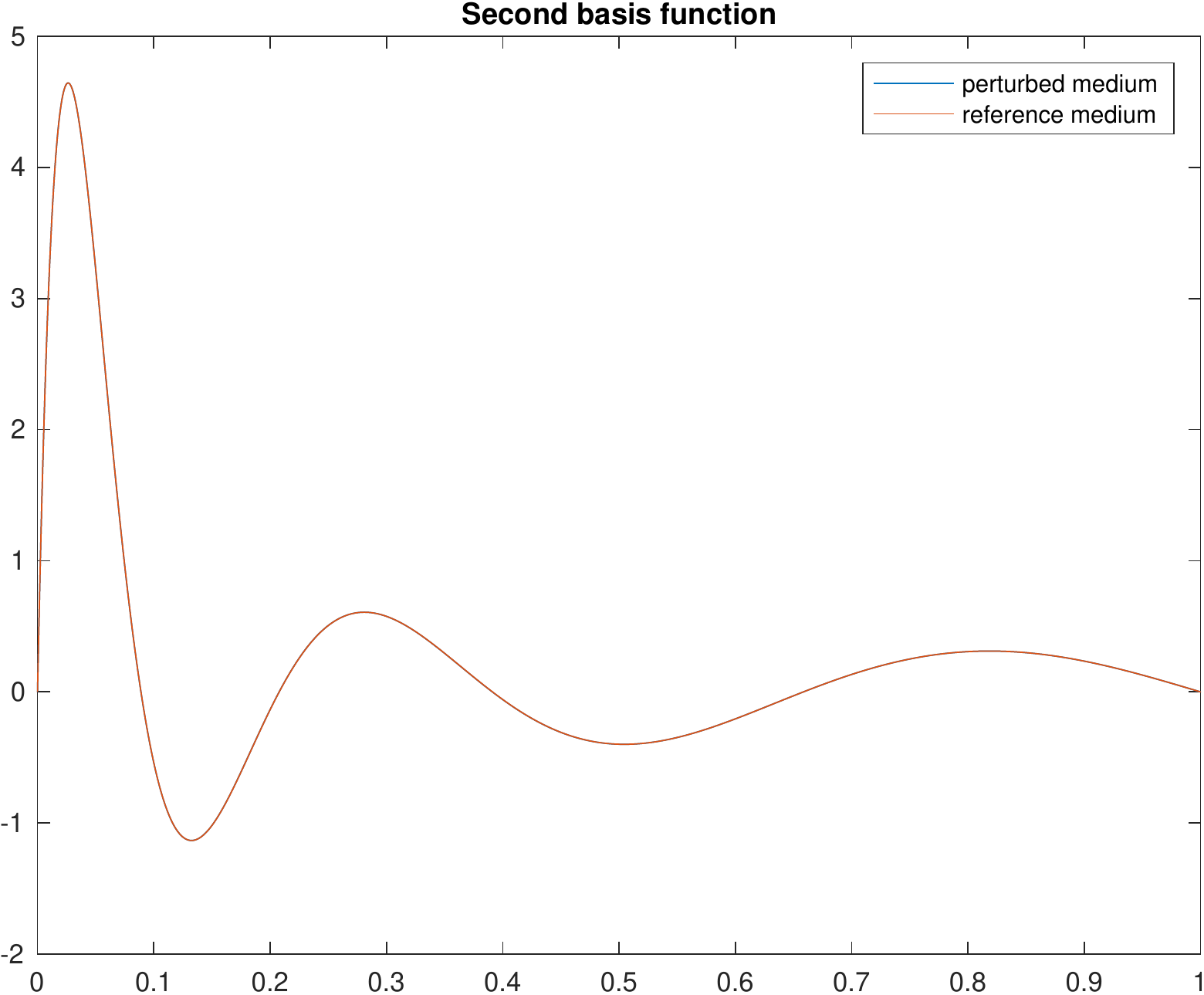} \\
%\vskip -1.4in
\includegraphics[scale=.3]{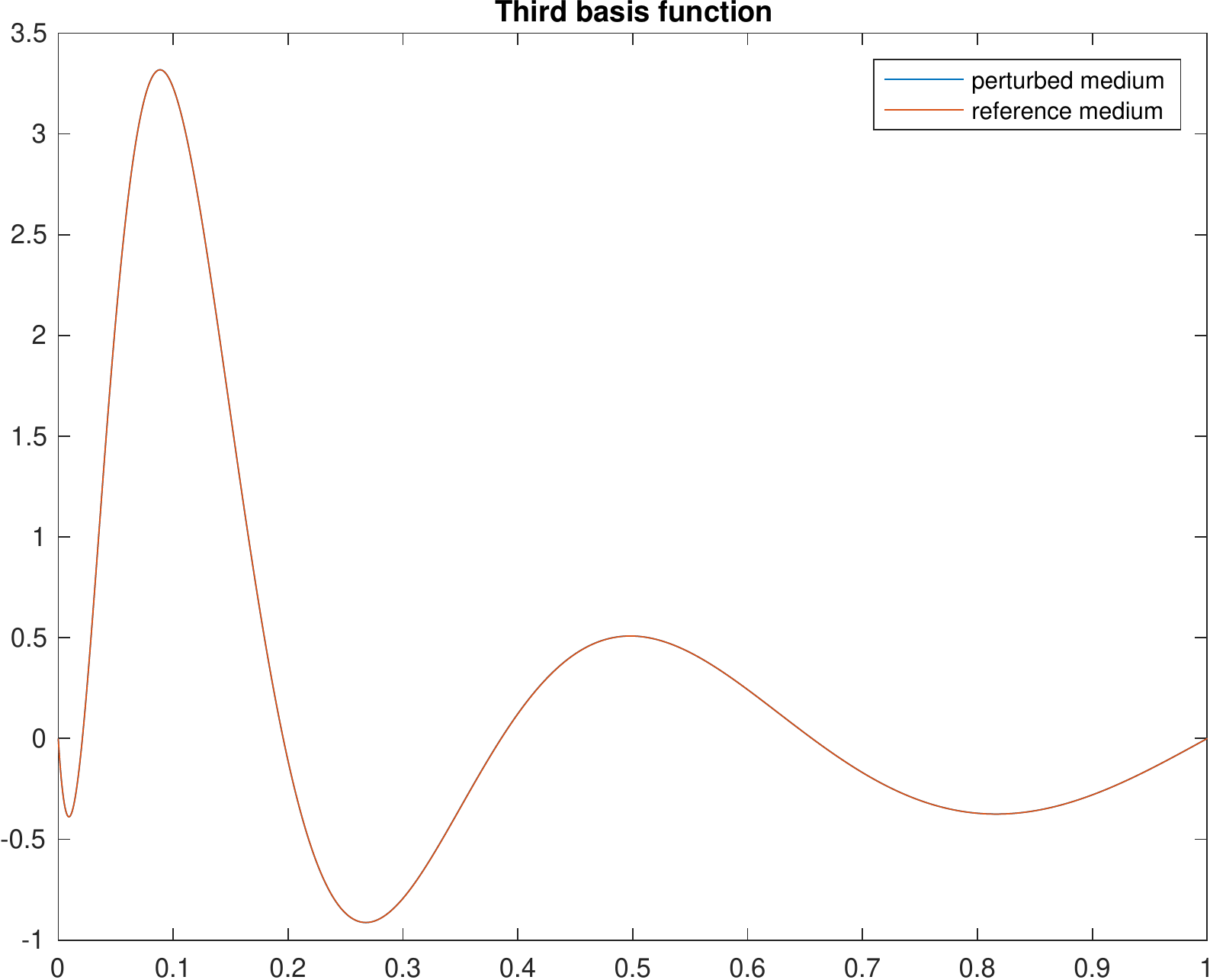}
\includegraphics[scale=.3]{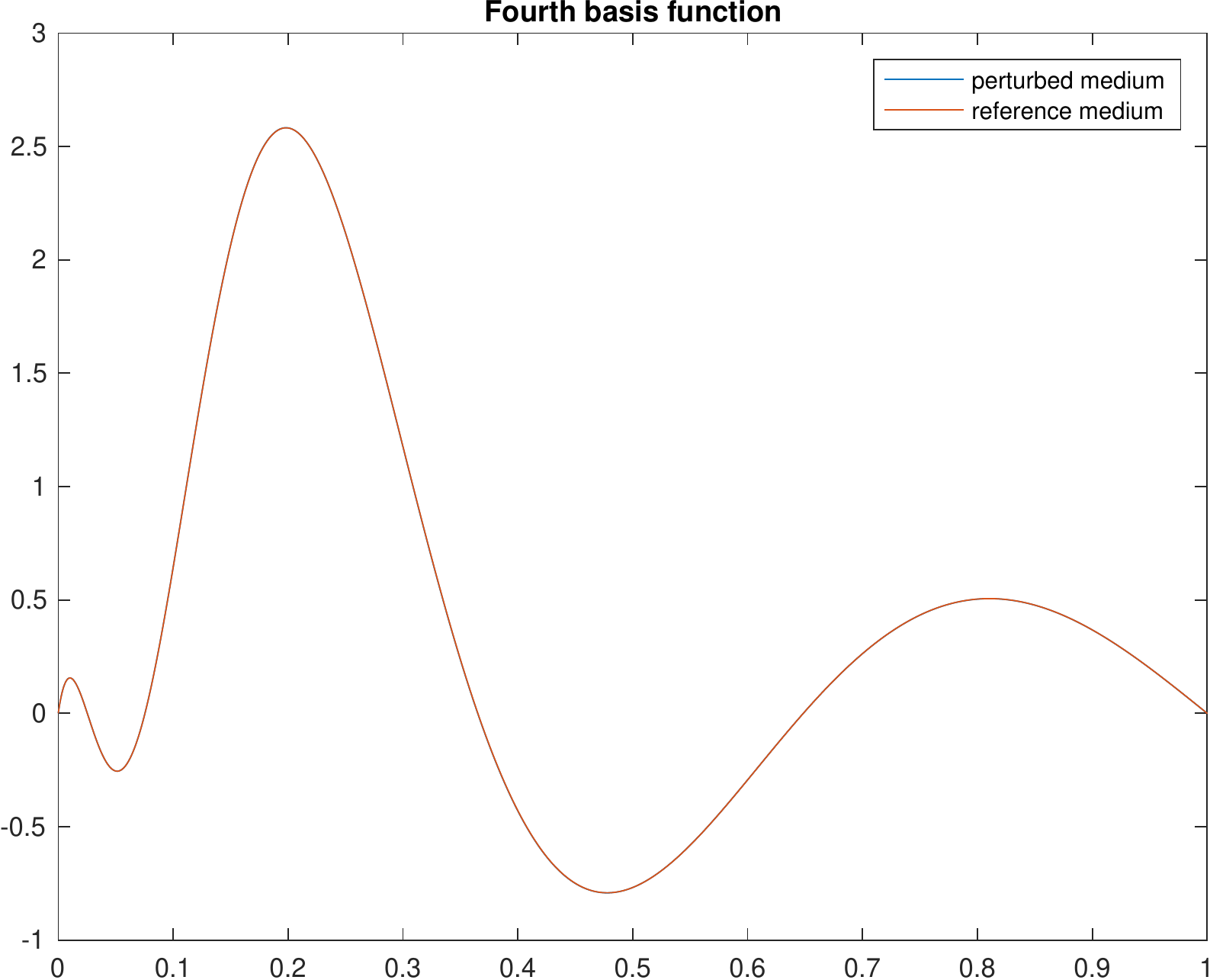} \\
%\vskip -1.4in
\includegraphics[scale=.3]{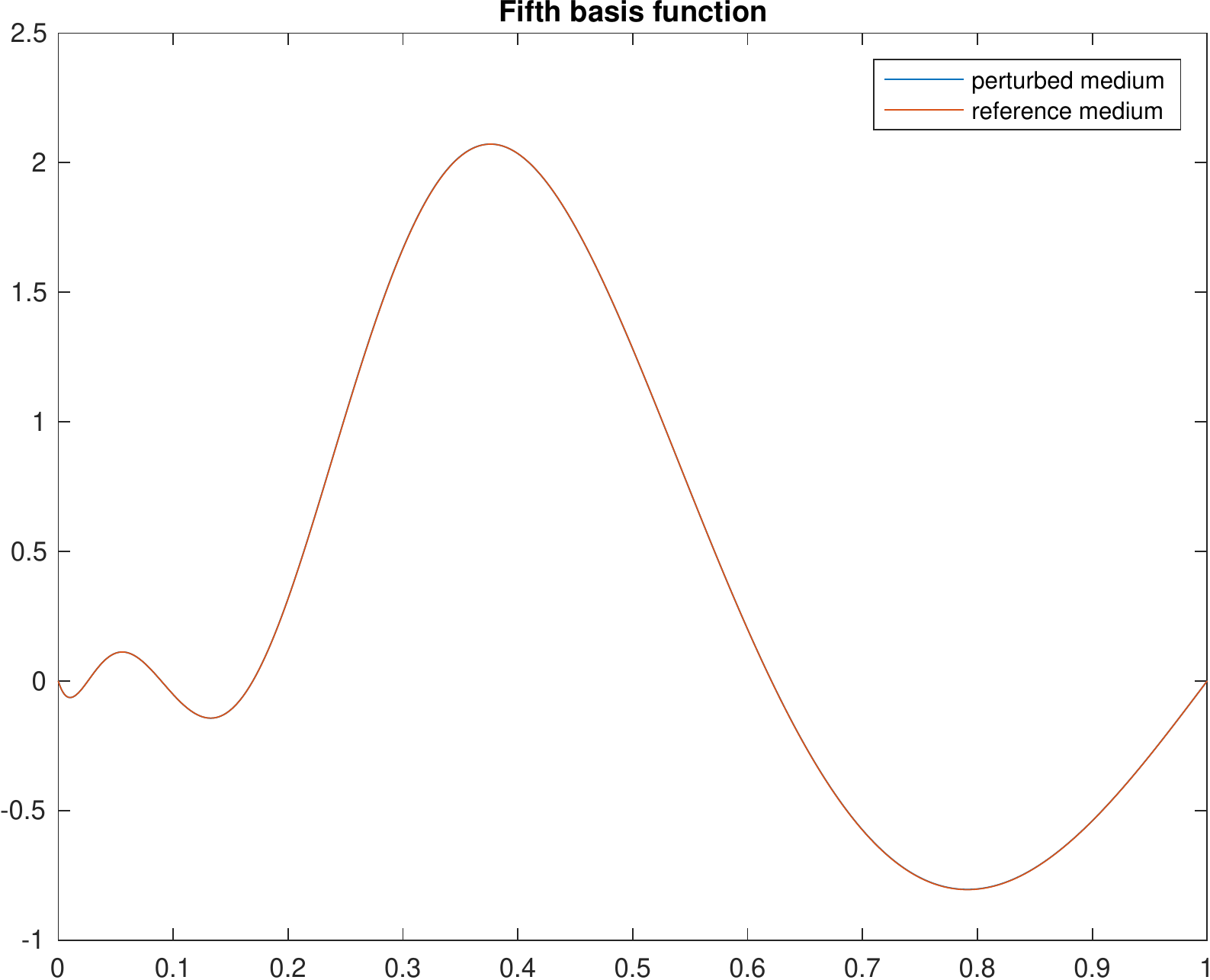}
\includegraphics[scale=.3]{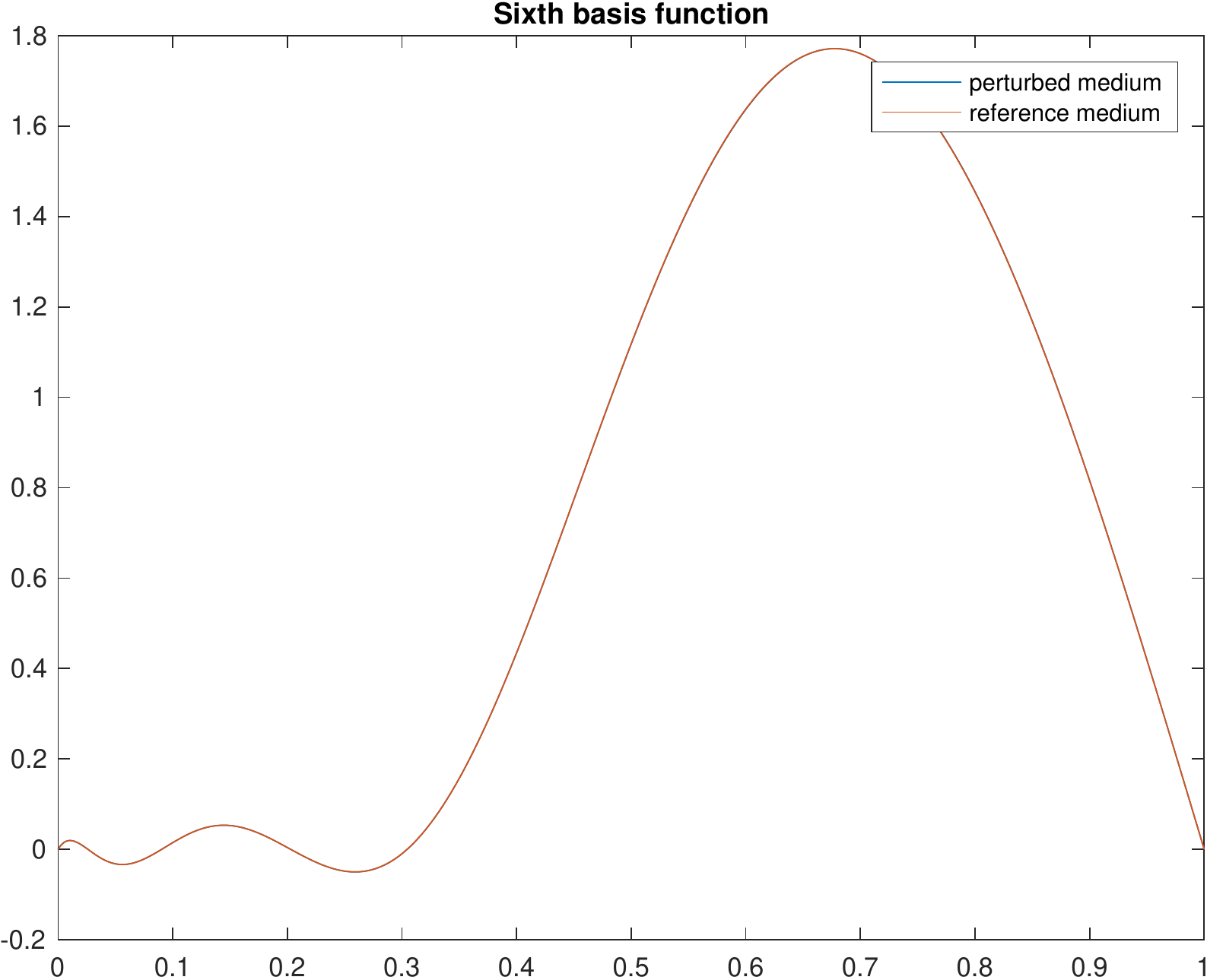} 
%\vskip -1in
\caption{ Weak dependence of orthogonalized bases on $q$ . Orthogonalized basis functions for reference and perturbed medium visually coincide, here $m=6$. }
\label{basis}\end{figure}

%\begin{figure}
%\centering
%%\vskip -1in
%       \includegraphics[scale=1]{basisPerturbed10earlier}
%%       \vskip -.5in
%   \caption{ Weak dependence of orthogonalized bases on $q$ . Perturbed to reference medium has a 10:1 contrast. }
%\end{figure}
%

\begin{figure} 
\centering
%\vskip -1in
\includegraphics[scale=.34]{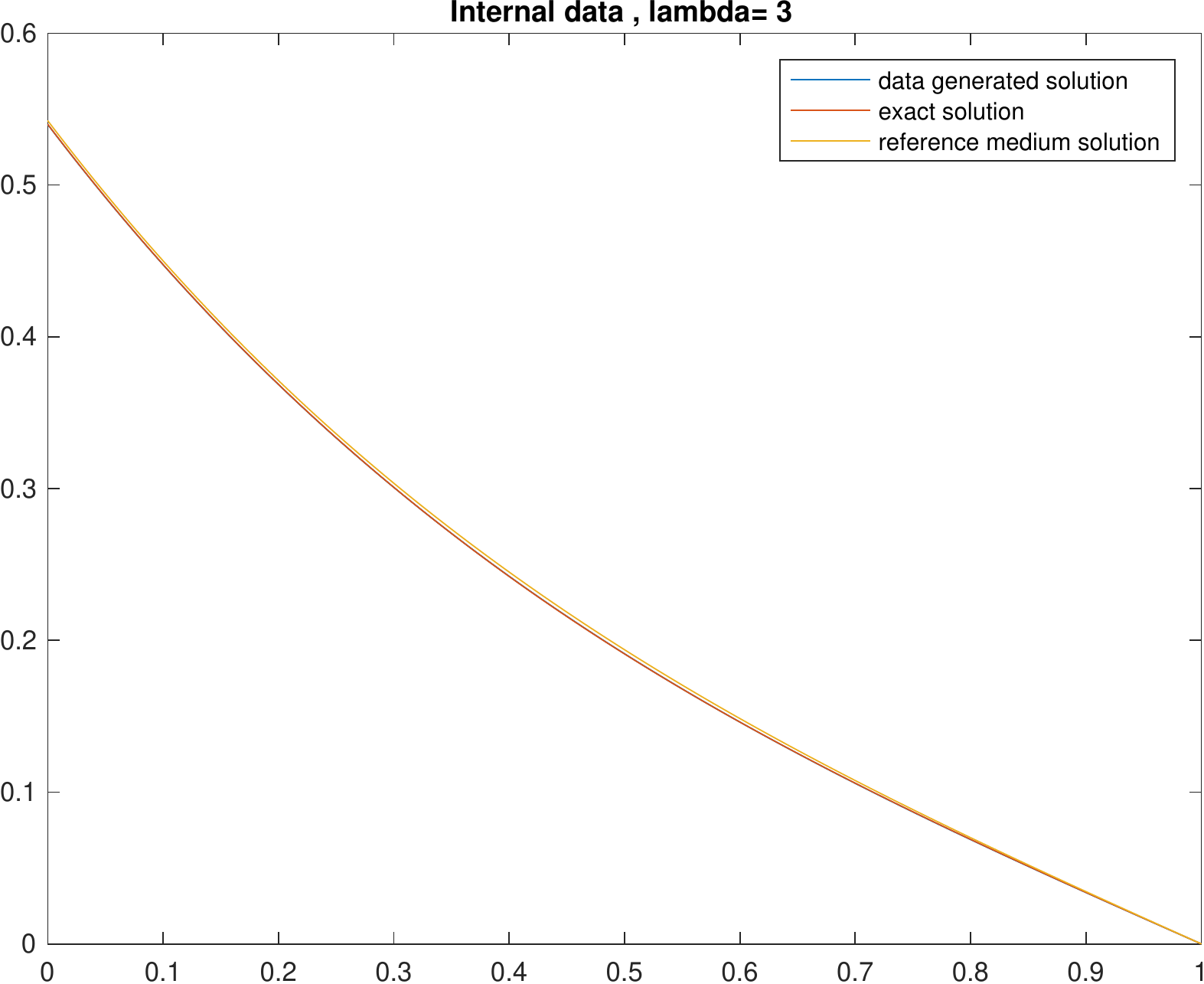}
\includegraphics[scale=.34]{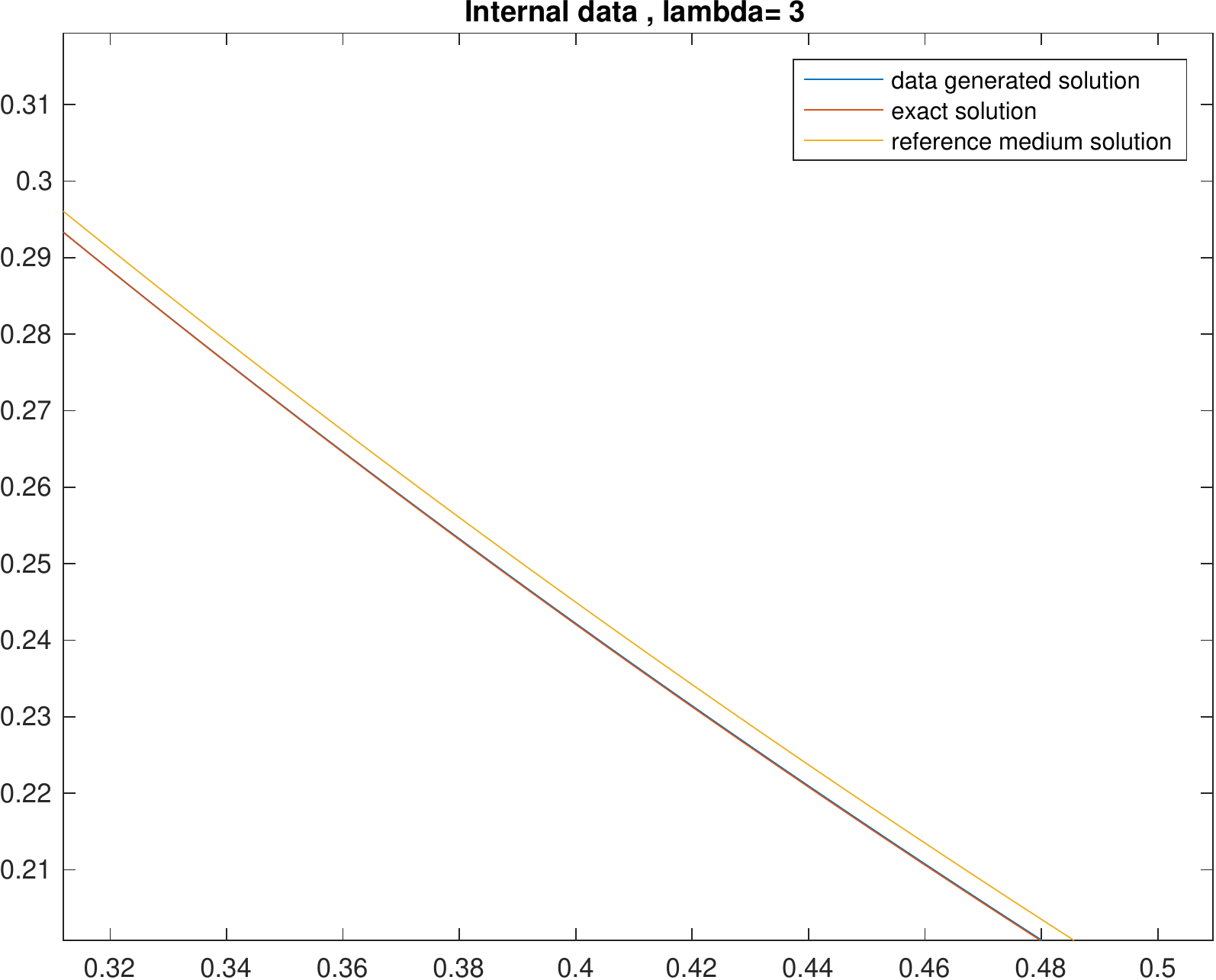}
\caption{Left: Internal solution for arbitrarily chosen spectral value $\lambda=3$ generated from data, compared to exact perturbed and reference medium solutions. Right: Zoom in of same. }
%\vskip -2.5in
\label{internal1d}
\end{figure}

\begin{figure} 
\centering
%\vskip -1in
\includegraphics[scale=.4]{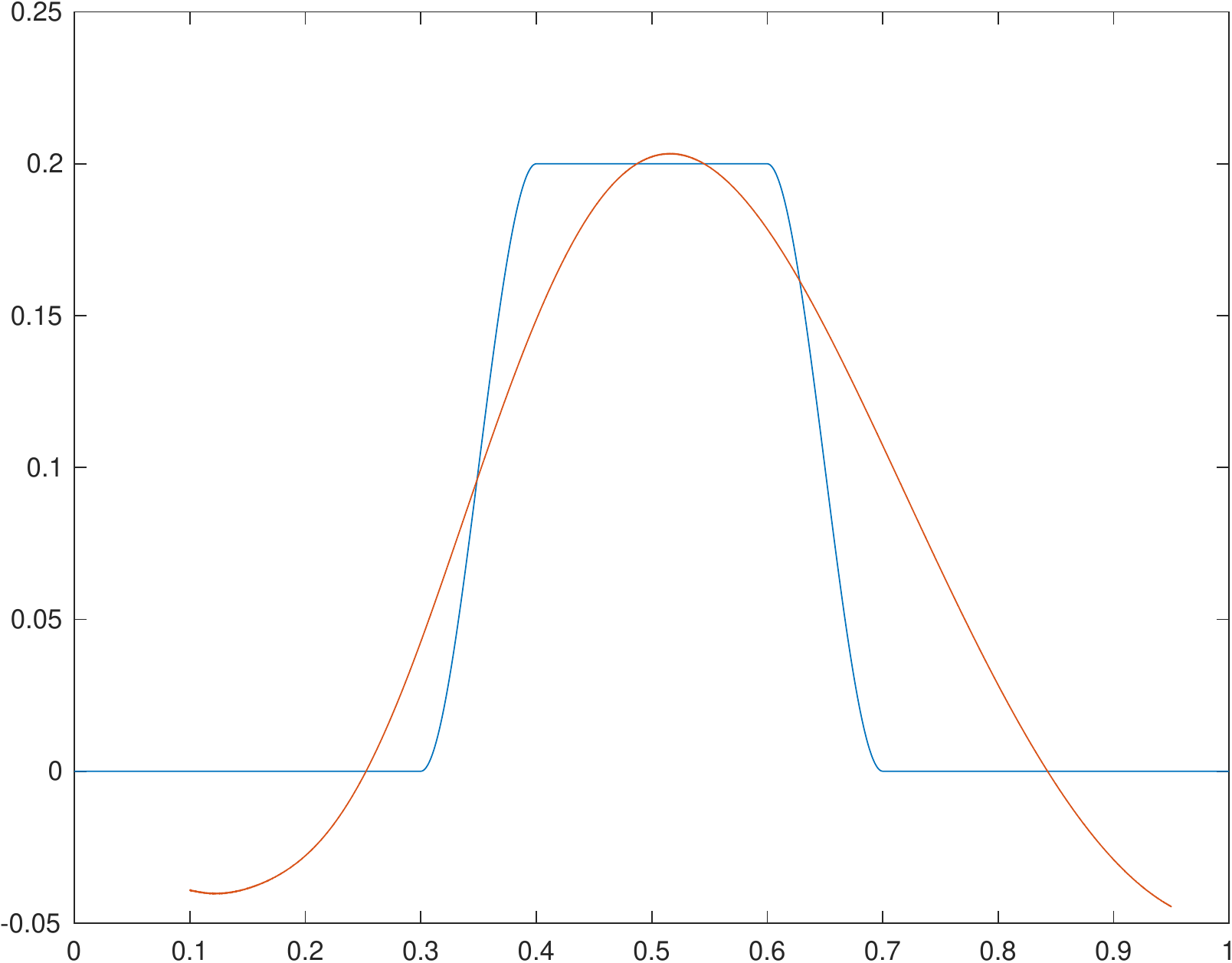}
%\vskip -1in
\caption{Reconstruction (red)  of piecewise cubic $q(x)$  (blue) using derivatives of data generated internal solution. }
\label{recon1d1}\end{figure}

\begin{figure} 
\centering
\includegraphics[scale=.4]{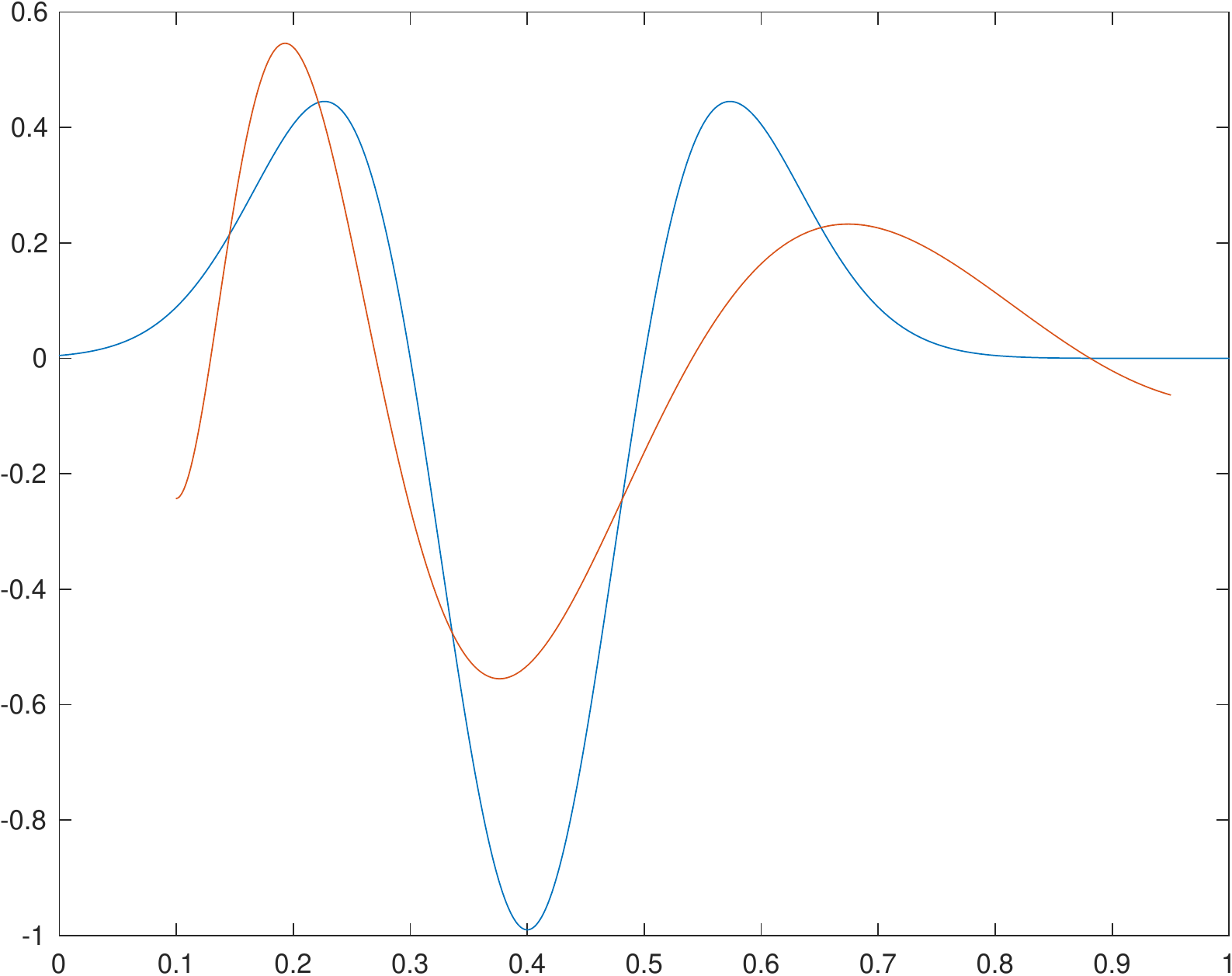}
%\vskip -1in
\caption{Reconstruction (red) of smooth $q(x)$ (blue) using derivatives of data generated internal solution. }
\label{recon1d2}\end{figure}

In summary, the spectral data gives the Galerkin system ROM for (\ref{1d1side}) corresponding to spectral snapshots. Simultaneously, we can do the same for a reference medium. Lanczos orthogonalization localizes the spectral snapshots, forming a basis which depends very weakly on the medium. In Figure \ref{basis}, we show an example of the orthogonalized basis functions for a piecewise cubic $q$ varying from $0$ up to $.2$ and back down to $0$. Here they are plotted against the orthogonalized basis functions for the reference medium, and appear indistinguishable. 
Therefore, although we do not have the internal spectral snapshots for the true medium, in the localized basis they are very close to those of the reference medium. For any given $\lambda$ now, by solving the Galerkin system, we can get the coefficients, and this yields a good approximation to internal data.  See Figure \ref{internal1d} for an example of an internal solution.  One possibility is to use the internal solution to do inversion- for example by calculating $(\tilde{u}^{\prime\prime}-\lambda \tilde{u})/\tilde{u}\approx q $.  Note that for positive $\lambda$, $\tilde{u}$ is never zero.  We try this simple approach in Figure \ref{recon1d1} and Figure \ref{recon1d2}.

\section{Higher dimensional problems.}
\subsection{Galerkin model from spectral data in higher dimensions.}
Again we follow the Loewner framework. Consider the full problem (\ref{2dprob1}) with given data (\ref{data}). 
%\begin{eqnarray} \label{2dprob} -\Delta u+q(x) u +\lambda u  & = & 0 \ \ \mbox{ in } \ \ \ \Omega \\   {\partial u\over {\partial \nu}} & = & g\ \ \mbox{ on } \partial \Omega \nonumber 
%\end{eqnarray}
%where we assume we read data corresponding to $K$ "sources", that is, Neumann data functions $\{ g_1, \ldots,g_K \} $  and $m$ spectral values, 
%\begin{eqnarray} \label{2dKsource} -\Delta u_i^r +q(x) u_i^r +b_i u_i^r & = & 0 \ \ \mbox{ in } \ \ \ \Omega \\   {\partial u_i^r\over {\partial \nu}} & = & g_r \ \ \mbox{ on } \partial \Omega \nonumber 
%\end{eqnarray}
%where $u_i^r$ is our solution corresponding to spectral value $b_i$ and source $g_r$.  Consider the matrix valued data as a function of the spectral parameter
%$$ F_{rl}(\lambda) = \int_{\partial \Omega} u^r g_l $$
%where $u^r$ is the solution to (\ref{2dKsource}) where $b_i=\lambda $. Assume we read the data  
%$$ F^i_{rl} := F_{rl}(b_i)= \int_{\partial \Omega} u_i^r g_l $$
%and 
%$$DF^i_{rl} := {d F_{rl}\over{d\lambda}}(\lambda)|_{\lambda=b_i} $$
%for $i=1,\dots, m$ and $r,l=1,\ldots,K$.  
Assuming that each Neumann source function $g_r\in H^{-1/2}(\partial \Omega)$ and $q\in L^\infty(\Omega)$, the variational formulation of (\ref{2dprob1}) is: Find $u_i^r\in H^1(\Omega)$ such that 
$$\int_\Omega \nabla u_i^r \cdot \nabla \phi +\int_\Omega q u_i^r \phi +b_i\int_\Omega u_i^r \phi = \int_{\partial\Omega} g_r \phi $$
for all $\phi\in H^1(\Omega)$.  Generating a Galerkin subspace with these $m*K$ exact solutions  $$ V_{mK}=\mbox{span} \{  \{ u_i^r \}_{i=1,\ldots,m, r=1,\ldots,K}  \} $$ and using them as test functions, we have that
\begin{eqnarray}\int_\Omega \nabla u_i^r \cdot \nabla u_j^l  +\int_\Omega q u_i^r u_j^l +b_i\int_\Omega u_i^r u_j^l &=& \int_{\partial\Omega} g_r u_j^l \\
&=& F^j_{lr} \end{eqnarray}
for all $i,j=1,\dots, m$ and $r,l=1,\ldots,K$. That is,
$$ S_{irjl} + b_i M_{irjl}= F^j_{lr} $$
where $S$ and $M$ are the stiffness and mass matrices respectively.
From this and the same derivation as in one dimension, one obtains them directly from the data
\begin{equation} \label{M2deq}  M_{irjl}= { F^j_{lr}- F^i_{lr} \over{b_i-b_j}}, \end{equation} 
\begin{equation} \label{M2deq2} M_{iril}= - DF^i_{lr}, \end{equation}
 \begin{equation} \label{S2deq}  S_{irjl}= {b_j F^j_{lr}- b_i F^i_{lr} \over{b_j-b_i}}, \end{equation} 
 and
\begin{equation} \label{S2deq2} S_{iril}= (\lambda F_{rl})^\prime (b_i) .\end{equation}
\subsection{Block Lanczos orthogonalization and generation of the block tridiagonal reduced order model.}
Note the above $S$ and $M$ can be viewed as $m\times m$ matrices with entries that are $K\times K$ blocks. First we consider our data functional $\delta_F (\phi): H^1(\Omega)\rightarrow \mathbb{R}^K $ which is defined by
$$ \left[ \delta_F(\phi) \right]_l := \int_{\partial\Omega} \phi g_l \ \ \mbox{ for } \ l= 1,\ldots, K .$$ 
Similarly to what was done in one dimension, we find the projection of each component of $\delta_F(\phi) $ onto $V_{mK}$.  That is, we look for $\delta_{mK}^l \in V_{mK}$ such that
$$\int_\Omega \delta_{mK}^l  u_j^r = \int_{\partial \Omega} u_j^r g_l = F^j_{lr} .$$
That is, we identify $\delta_{mK}^l$ with its coefficients $\vec{d^l}$ in the basis for $V_{mK}$;
$$ \delta_{mK}^l = \sum_{i=1,\dots, m; r=1,\ldots, K}  d^l_{ir} u_i^r , $$
which again can be obtained directly by solving the $m\times m$ matrix system of $K\times K $ blocks $$ M \vec{d} = F.$$ We now use this as our starting vector for block Lanczos orthogonalization with respect to the basis generated by powers of the matrix $A=M^{-1}S$:
$$B=\{  \vec{d}, A\vec{d}, A^2\vec{d}, \ldots, A^{m-1}\vec{d} \}.$$
and we orthogonalize using Gram-Schmidt with respect to the mass matrix $M$ inner product 
$$\langle \vec{x},\vec{y} \rangle_M := \langle M\vec{x},\vec{y}\rangle .$$ This will yield again continuous $L^2$ orthogonality of the basis functions. Note that $A$ is symmetric with respect to this inner product and will be {\it block tridiagonal } in the new basis for $V_{mK}$:
\begin{equation}\label{orthogbasis2d} V_{mK} =\mbox{span}\{  \{ \hat{u}_i^r(x) \}_{i=1,\ldots,m; r=1,\ldots,K}  \}. \end{equation}
In this new basis, $\hat{M}$ will be identity and $\hat{S}$ will be block tridiagonal, yielding a sparse, spectrally converging ROM generated entirely from the data.

\subsection{Embedding into the continuous problem and generation of internal data.}
We now describe the algorithm to generate the internal solutions, which is simply a generalization of that from Section 2.3.  Here we assume we know or can compute solutions to the reference problem 
\begin{eqnarray} \label{2dKsource0} -\Delta (u_i^r)^0 +q_0 (u_i^r)^0 +b_i (u_i^r)^0 & = & 0 \ \ \mbox{ in } \ \ \ \Omega \\   {\partial (u_i^r)^0 \over {\partial \nu}} & = & g_r \ \ \mbox{ on } \partial \Omega \nonumber 
\end{eqnarray}
~\\

\noindent
\underline{\bf Algorithm}
\begin{enumerate}
\item Read data (again here synthetically generated) $$F^i_{rl}= \int_{\partial\Omega} u_i^r g_l $$ and  $$ {dF^i_{rl}\over{d\lambda}}$$ corresponding to some positive $\lambda=b_i$ for $i=1,\ldots,m$  and $K$ sources/receivers  $\{ g_r\} $ for the perturbed problem (\ref{2dprob1}). 

\item Compute all of the corresponding solutions $(u_i^r)^0$ for $i=1,\ldots,m, r=1,\ldots, K$ for the reference problem (\ref{2dKsource0}) and the corresponding reference data sets $F^0$ and $DF^0$. 

\item Use (\ref{M2deq}, \ref{M2deq2}, \ref{S2deq}, \ref{S2deq2}) to generate the $m\times m$ systems of $K\times K$ block stiffness and mass matrices $S$ and $M$ for Galerkin system for the perturbed problem, and similarly generate $S_0$, $M_0$ for the reference problem. 

\item Compute projection of the vector of functionals $\delta_F(\phi)$  onto $$V_{mK}=\mbox{span} \{  \{ u_i^r \}_{i=1,\ldots,m, r=1,\ldots,K}  \} $$ and onto the corresponding reference Galerkin space $$ V_{mK}^0 = \mbox{span} \{  \{ (u_i^r)^0 \}_{i=1,\ldots,m, r=1,\ldots,K}  \},$$ that is, we compute the coefficient vectors $\vec{d}$ and $\vec{d^0}$ by solving the systems  $$M\vec{d} = F $$ and $$M_0\vec{d^0} = F^0 $$
respectively.  

\item Set $A=M^{-1} S$, and perform block Lanczos orthogonalization with respect to the $M$ inner product using
$$\{  \vec{d}, A\vec{d}, A^2\vec{d}, \ldots, A^{m-1}\vec{d} \}$$
and let  $\hat{S}$,  $\hat{M}=I$ be the Galerkin stiffness and mass matrices in this new basis $ \{ \hat{u}_i^r\} $ for $V_{mK}$. Note that $\hat{S}$ is {\it block} tridiagonal with $K\times K$ blocks.

\item Similarly, set $A_0=M_0^{-1} S_0$ and perform Lanczos orthogonalization with respect to the $M_0$ inner product on the basis 
$$\{  \vec{d}_0, A_0\vec{d}_0, A_0^2\vec{d}_0, \ldots, A_0^{m-1}\vec{d}_0 \}$$ 
to obtain the corresponding orthogonalized basis $\{ (\hat{u}_i^r)^0\} $ for $V_{mK}^0$.

\item Now choose any spectral value $\lambda$ and solve the Galerkin system in the new basis $ c= (\hat{S}+\lambda I)^{-1} \hat{F}   $ , so that for each $r$,  $u^r_G=\sum_i c^r_i \hat{u}^r_i $ is the Galerkin projection of the solution of (\ref{2dprob1}) with $b_i=\lambda$ onto $V_{mK}$. Since we don't know $\hat{u}^r_i $ from the data, approximate $ u^r_G $ by $$ \tilde{u}^r = \sum_i c^r_i (\hat{u}^r_i)^0.$$

\item Compute  $(\Delta\tilde{u}^r -\lambda \tilde{u}^r)/\tilde{u}^r \approx q $; or reserve $\{ \tilde{u}^r \} $ for another use. 

\end{enumerate}
\begin{remark} We note that just as the internal solutions can be computed for any spectral value $\lambda$, they also be computed for any Neumann data where one can read the above solutions $u_j^r$ on their support.  For any given source $g$, the data $$F^j_r = \int_{\partial\Omega} u_j^r g$$ is the right hand side for the Galerkin system in the original basis, which can be transformed to $\hat{F}$ by the same change of basis found in the above Lanczos procedure. 
\end{remark}

\begin{figure}
\centering
\includegraphics[scale=.34]{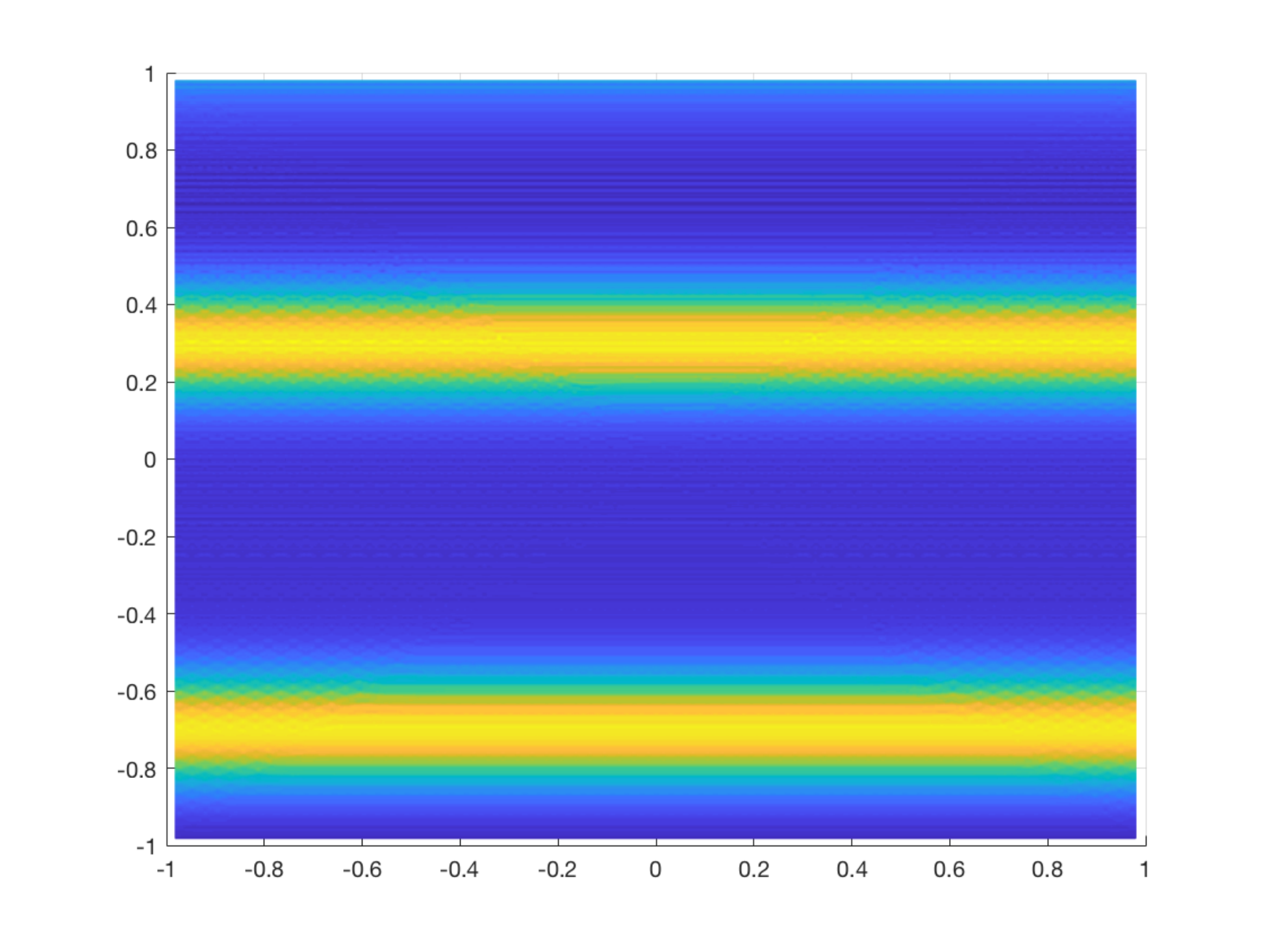}
\includegraphics[scale=.34]{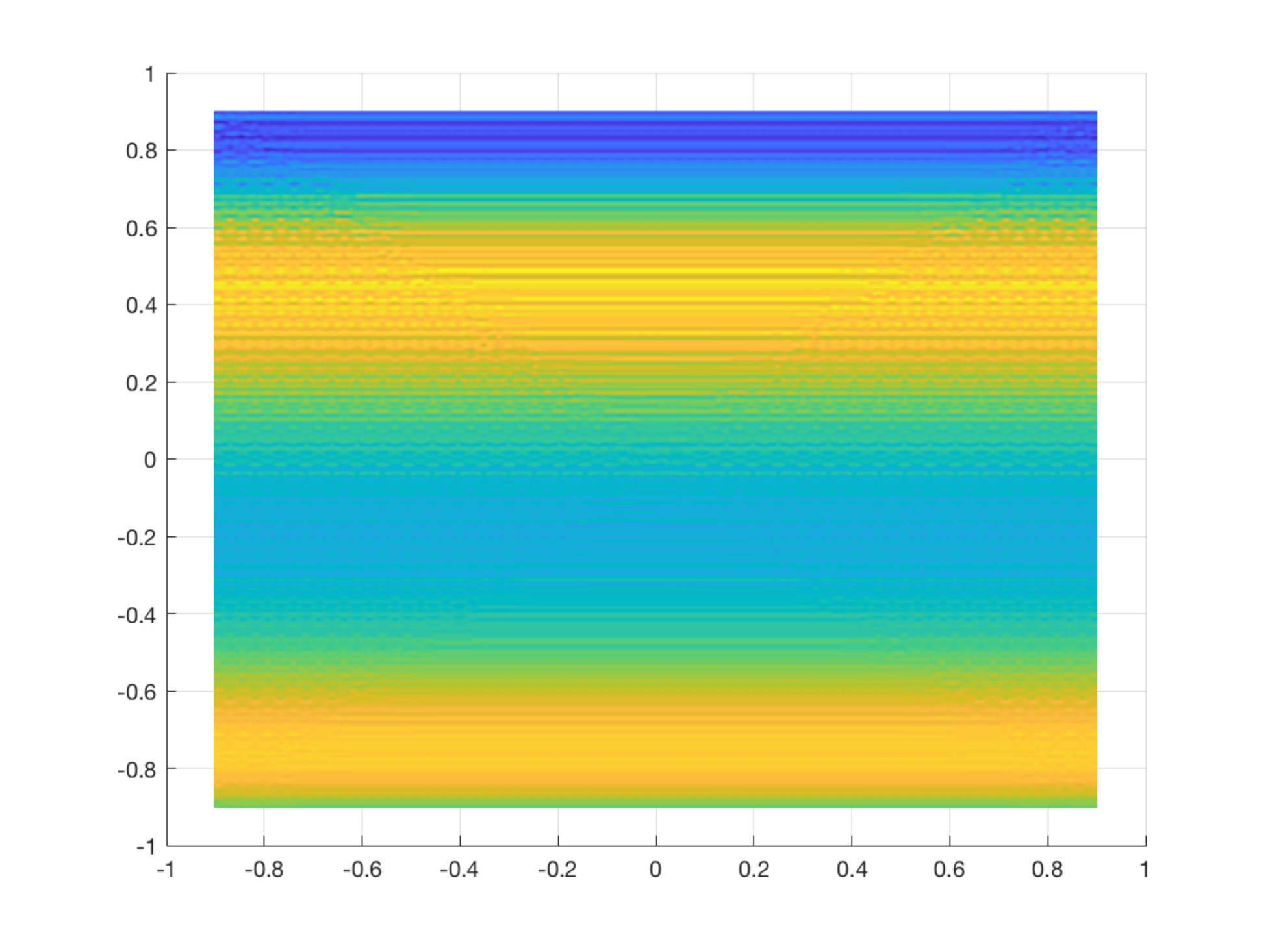}
%\vskip -1in
\caption{Left: Layered medium with one source on each side of layering (two sided problem). Left: Original medium.  Right: Reconstruction using data generated internal solutions. }
\label{layered}
\end{figure}

\begin{figure}
\centering
\includegraphics[scale=.34]{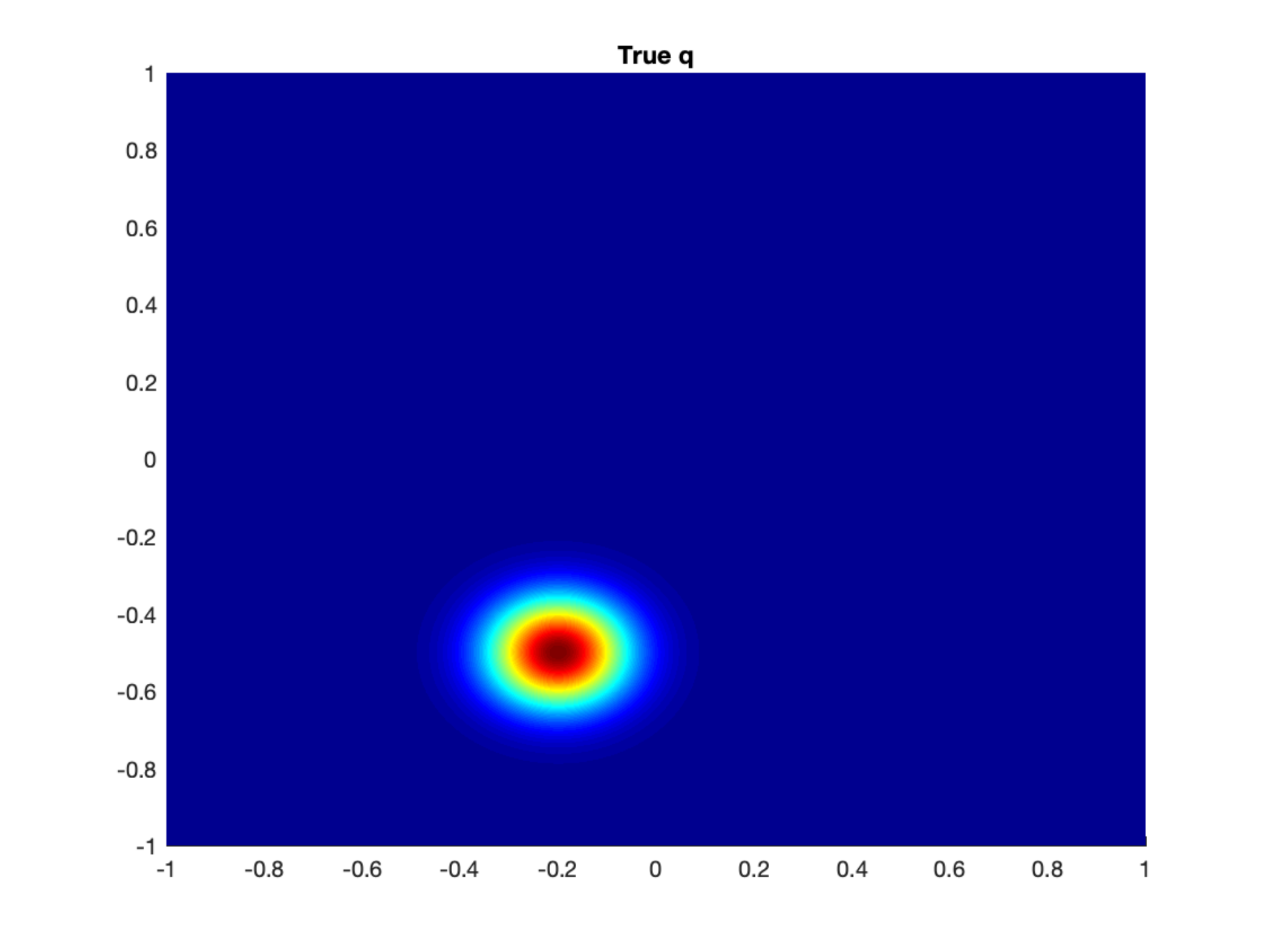}
\includegraphics[scale=.34]{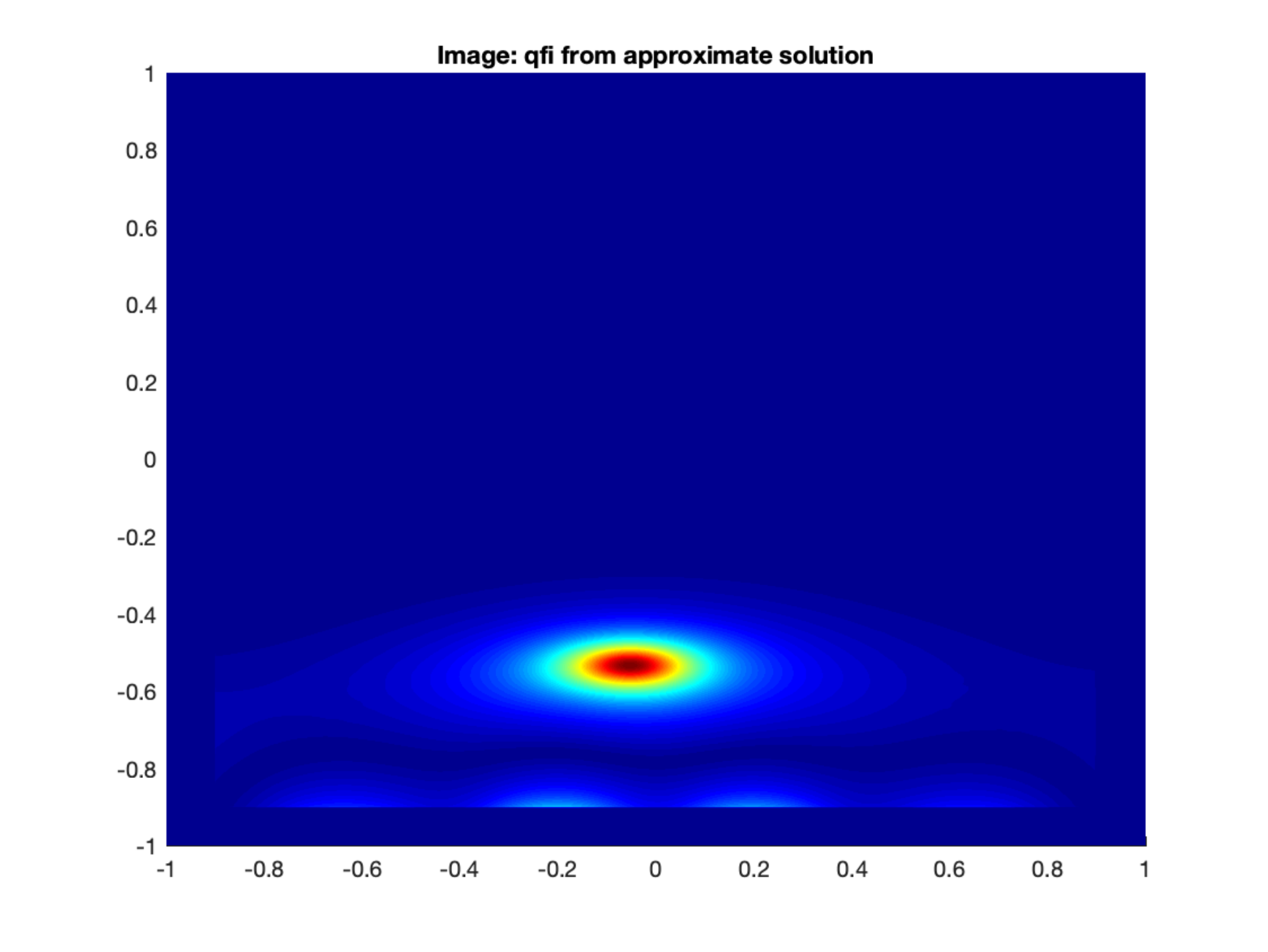}
%\vskip -1in
\caption{Left: Smooth bump profile. Right: Reconstruction using four sources/receivers on bottom only, using data generated internal solution. }
\label{onebump}
\end{figure}

\begin{figure}
\centering
\includegraphics[scale=.34]{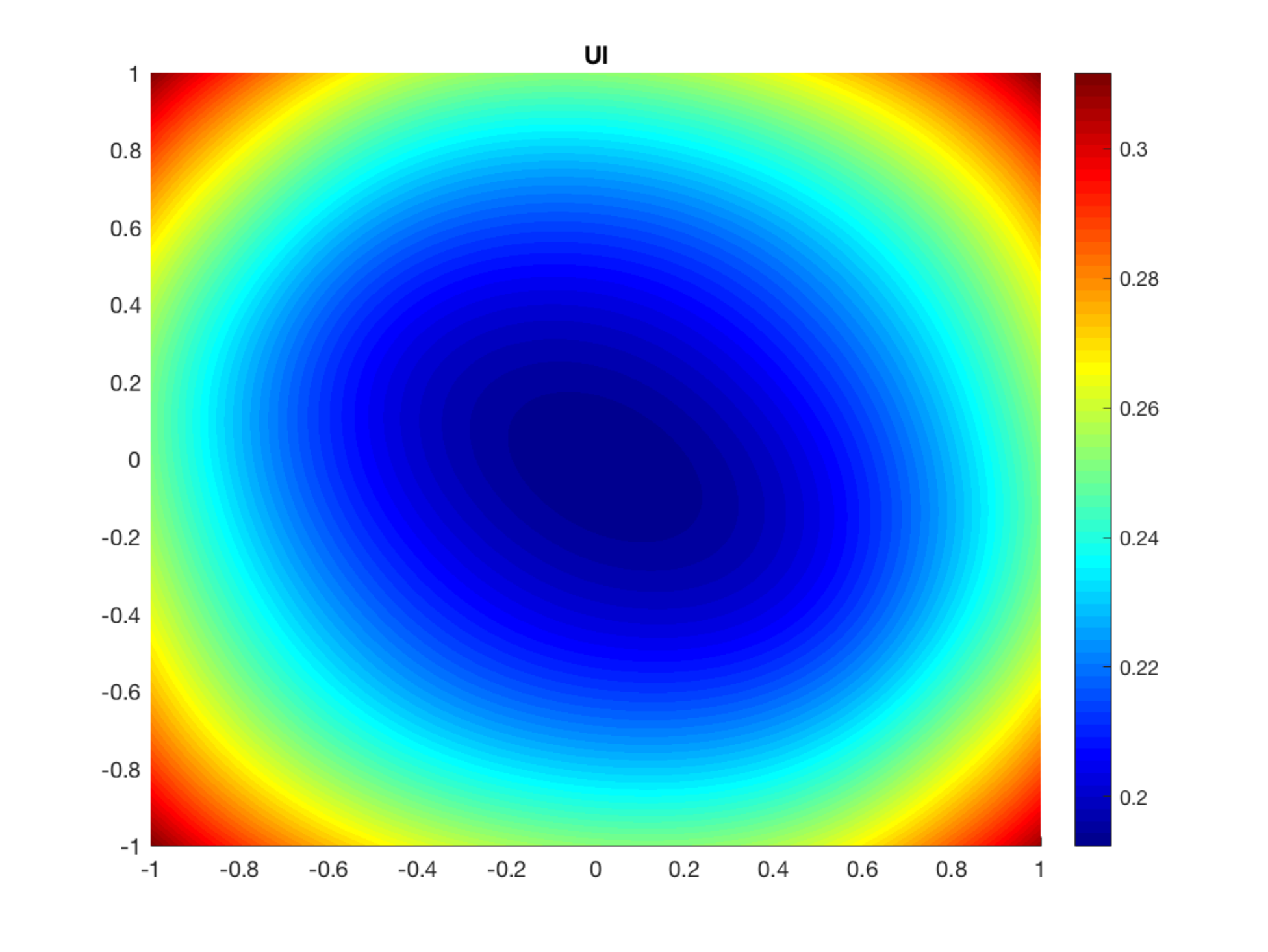}
\includegraphics[scale=.34]{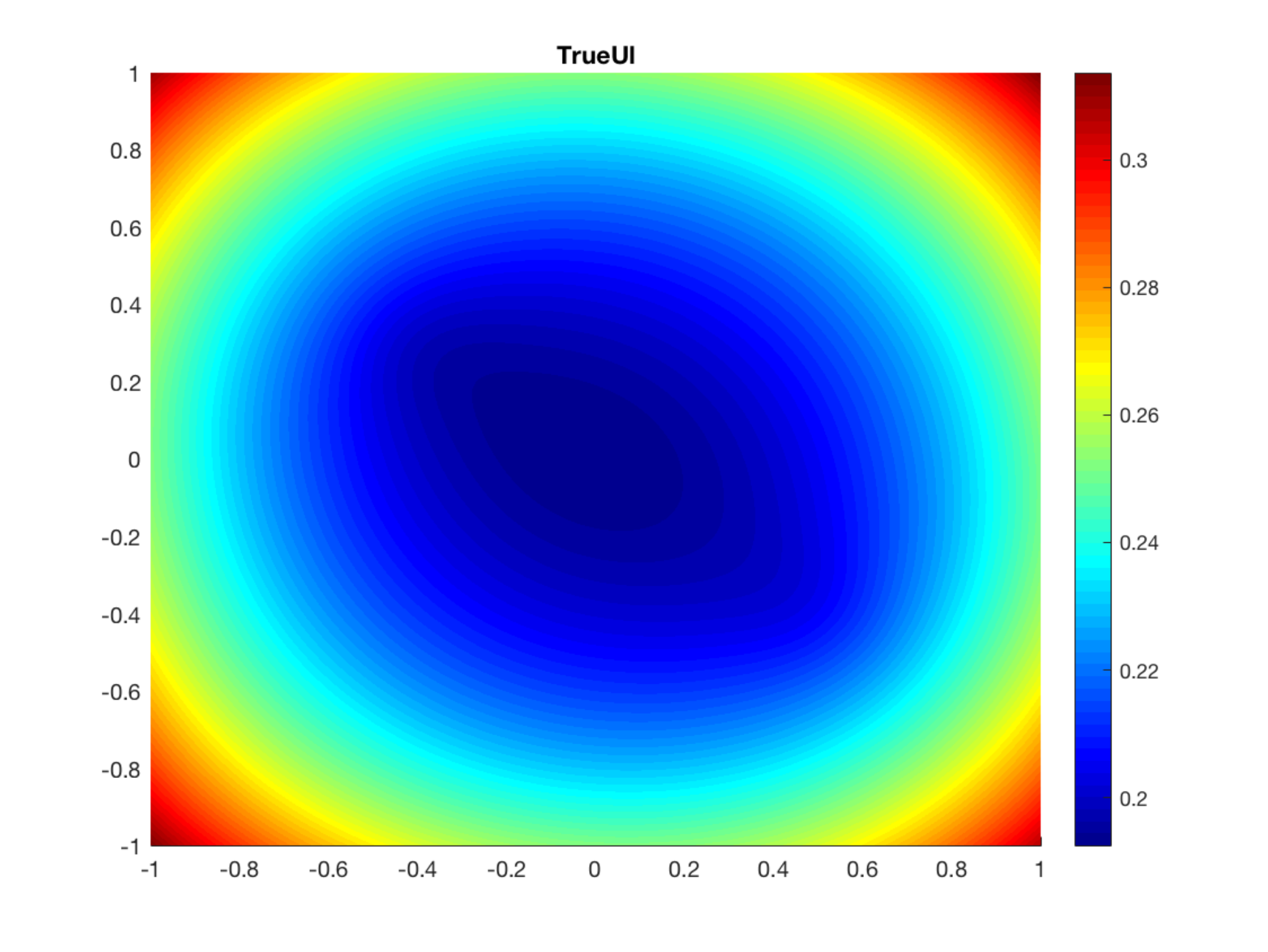}
%\vskip -1.5in
\includegraphics[scale=.34]{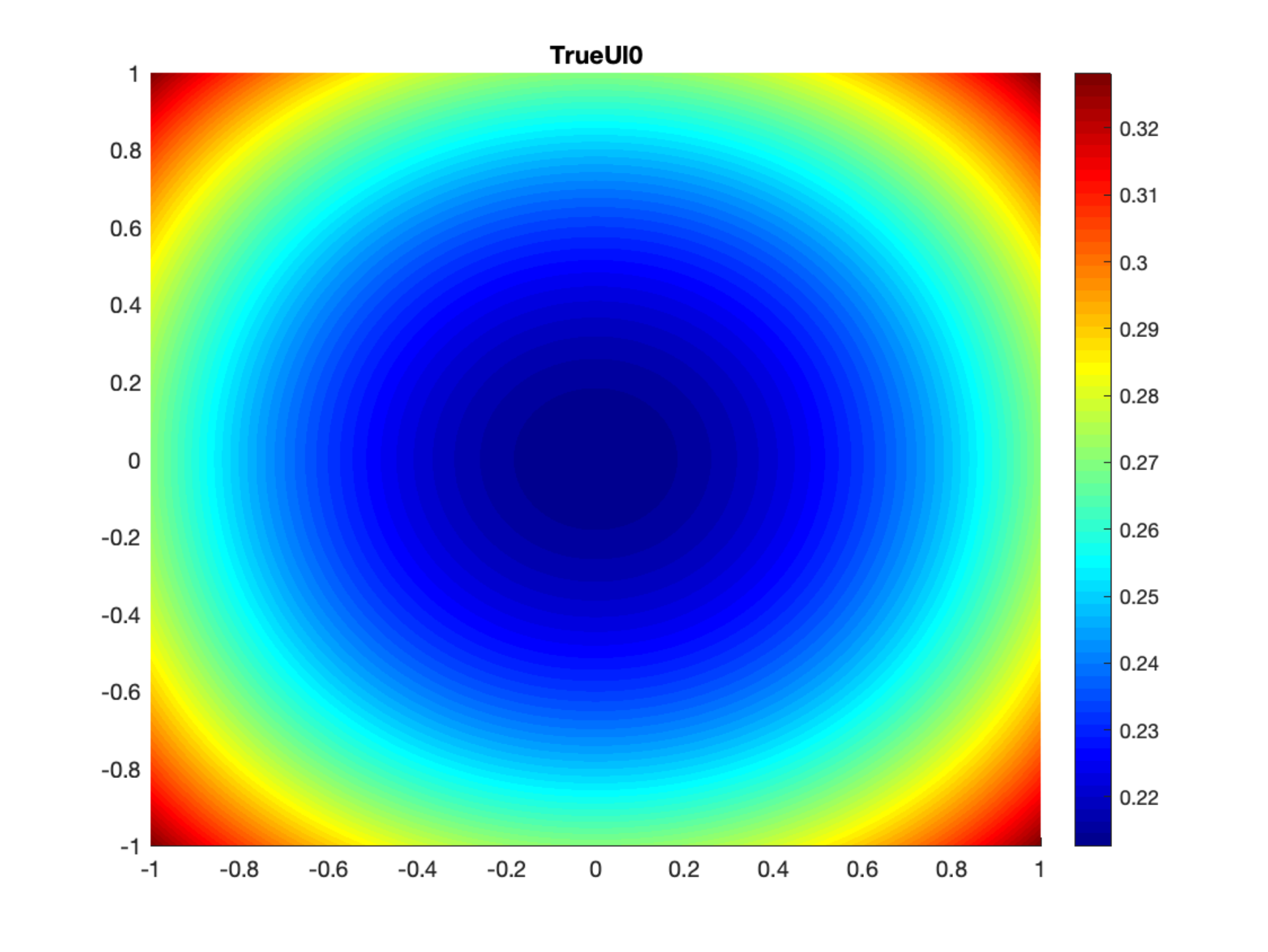}
%\vskip -1in
\caption{Left: Data generated internal solution for two bumps. Right: Actual internal solution. Bottom: Background solution for comparison. The relative error between the true and data generated internal solutions is 0.003930. For comparison, the relative error between the true and reference medium internal solutions is 0.084794.}
\label{twobumpinternal}
\end{figure}

\begin{figure}
\centering
\includegraphics[scale=.34]{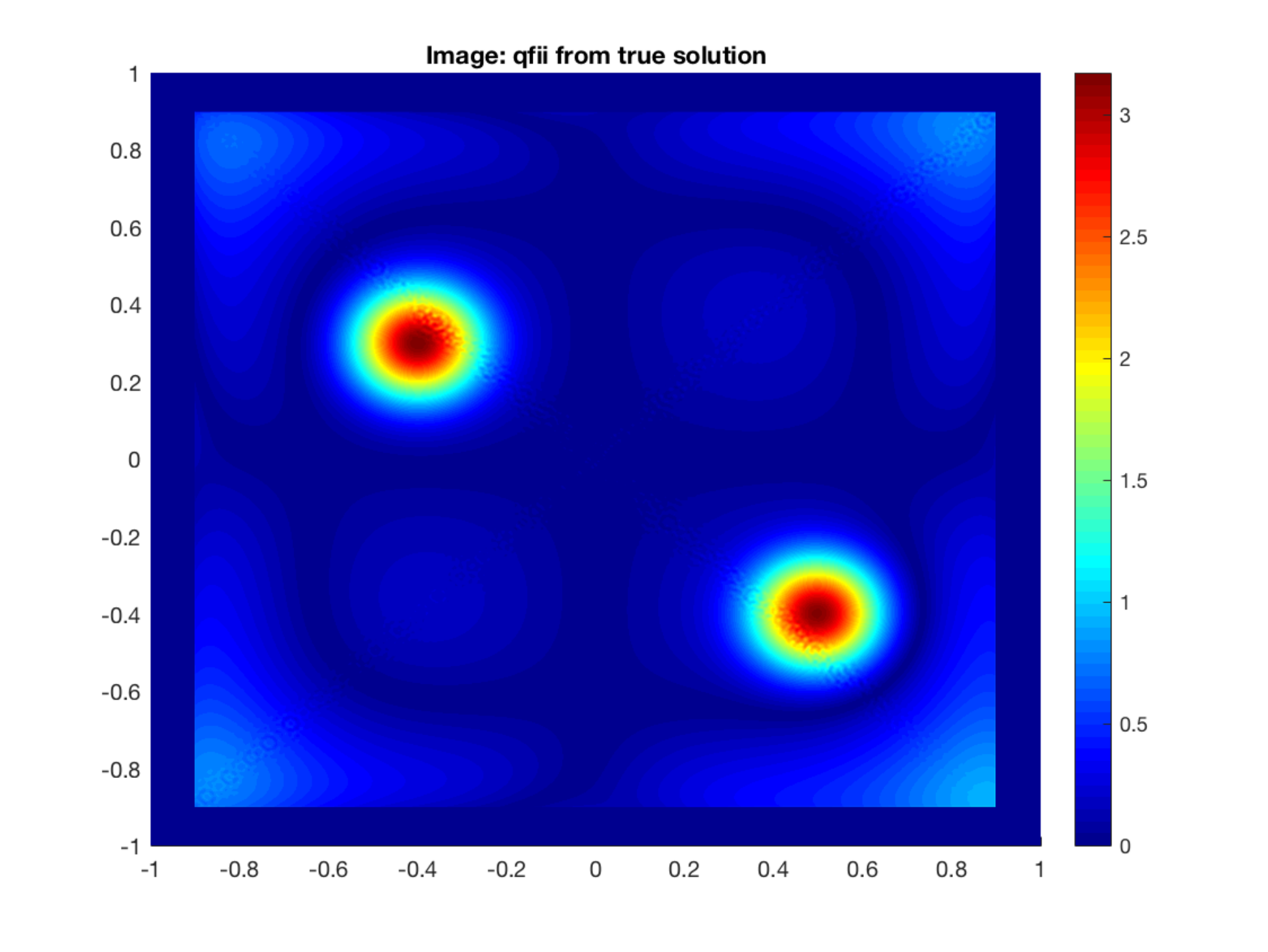}
\includegraphics[scale=.34]{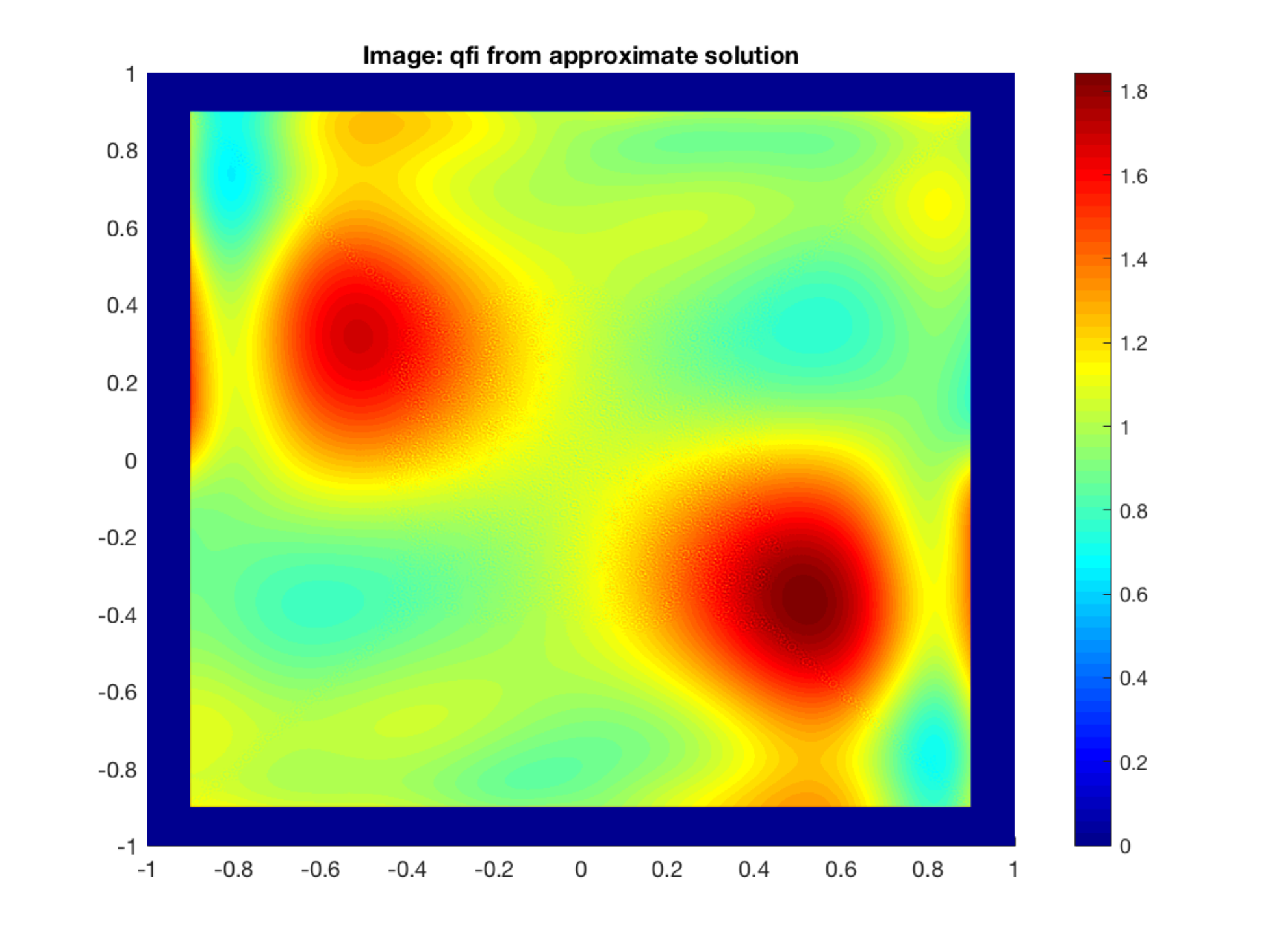}
%\vskip -1in
\caption{Reconstruction of two bumps  . Eight sources total; two on each side, and six spectral values. }
\label{twobumpreconstruct}
\end{figure}

We now describe a few preliminary numerical experiments where we compute internal solutions from the data using the above described algorithm. In the first experiment, we consider a $q$ which is 1-d smoothly varying medium with two Gaussian bumps. For this case, we will use two sources; each a constant on one of the two sides of the direction of variation. We do not assume apriori that we know the medium is only one dimensionally varying. In this case, $K=2$ corresponding to the two sources, and we use six spectral values, yielding a ROM of total size $12\times 12$. We implement the above algorithm as described, and show the original medium and its reconstruction in Figure \ref{layered}.

In our next example, we consider a medium with a smooth two dimensional bump, and probe with four ($K=4$)  piecewise constant sources, all on one side of the domain. In this case we use $s=4$ spectral values, yielding a ROM of total size $16\times 16$. The results of the reconstruction are in Figure \ref{onebump}. 

Finally, we consider a two-bump medium and probe with $K=8$  piecewise constant sources; two on each side. We use $s=6$ spectral values. For this example we first examine the internal solutions in Figure \ref{twobumpinternal}. One sees that the data generated internal solutions capture the asymetries in the true solution that the background solution does not. The error in the data generated internal solution is just under $0.4\%$. In Figure \ref{twobumpreconstruct} we apply the Laplacian to obtain a reconstruction in which we see the two bumps. 

\section{Conclusions}

Reduced order Galerkin models for spectral domain problems can be generated directly from boundary data, and we saw in one dimension that these lead to the same boundary response as the corresponding spectrally matched grid. After Lanczos orthogonalization, the Galerkin model becomes tridiagonal and becomes everywhere the spectrally matched finite difference system. Most importantly, the orthogonalized basis functions depend only very weakly on the medium. This allows one to generate accurate internal solutions entirely from boundary data.  The approach extends nicely to higher dimensions, and the internal solutions remain highly accurate, and  can be differentiated to yield a fast direct inversion method. Furthermore, the internal solutions themselves have potential to apply some problems directly, for example in applications to internal field monitoring for medical ablation.   A rigorous analysis of the dependence of the orthogonalized basis functions on the medium is still needed and is the subject of future work.

\section*{Acknowledgments}

{ Borcea’s research is supported in part by the NSF grant DMS1510429 and in part by the U.S. Office of Naval Research under award number N00014-17-1-2057. Mamonov's research is supported in part by the NSF grant DMS-1619821 and in part by the U.S. Office of Naval Research under award number N00014-17-1-2057. Moskow's research is supported in part by NSF grant DMS1715425.}

\bibliography{galerkincitations} 

\begin{thebibliography}{10}

\bibitem{AnLeSa}
Athanasios~C. Antoulas, Sanda Lefteriu, and A.~Cosmin Ionita.
\newblock A tutorial introduction to the {L}oewner framework for model
  reduction.
\newblock In {\em Model reduction and approximation}, volume~15 of {\em Comput.
  Sci. Eng.}, pages 335--376. SIAM, Philadelphia, PA, 2017.

\bibitem{AS-IP-10}
S.~R. Arridge and J.~C. Schotland.
\newblock Optical tomography: forward and inverse problems.
\newblock {\em Inverse Problems}, 25:123010, 2010.

\bibitem{ArSc}
Simon~R. Arridge and John~C. Schotland.
\newblock Optical tomography: forward and inverse problems.
\newblock {\em Inverse Problems}, 25(12):123010, 59, 2009.

\bibitem{BoDrMa}
L.~Borcea, V.~Druskin, and A.~V. Mamonov.
\newblock Circular resistor networks for electrical impedance tomography with
  partial boundary measurements.
\newblock {\em Inverse Problems}, 26(4):045010, 30, 2010.

\bibitem{BoDrMaGu}
L.~Borcea, V.~Druskin, A.~V. Mamonov, and F.~Guevara~Vasquez.
\newblock Pyramidal resistor networks for electrical impedance tomography with
  partial boundary measurements.
\newblock {\em Inverse Problems}, 26(10):105009, 36, 2010.

\bibitem{Bo}
Liliana Borcea.
\newblock Electrical impedance tomography.
\newblock {\em Inverse Problems}, 18(6):R99--R136, 2002.

\bibitem{BoDr}
Liliana Borcea and Vladimir Druskin.
\newblock Optimal finite difference grids for direct and inverse
  {S}turm-{L}iouville problems.
\newblock {\em Inverse Problems}, 18(4):979--1001, 2002.

\bibitem{BoDrGuMa}
Liliana Borcea, Vladimir Druskin, Fernando Guevara~Vasquez, and Alexander~V.
  Mamonov.
\newblock Resistor network approaches to electrical impedance tomography.
\newblock In {\em Inverse problems and applications: inside out. {II}},
  volume~60 of {\em Math. Sci. Res. Inst. Publ.}, pages 55--118. Cambridge
  Univ. Press, Cambridge, 2013.

\bibitem{BoDrMaZa}
Liliana Borcea, Vladimir Druskin, Alexander~V. Mamonov, and Mikhail Zaslavsky.
\newblock A model reduction approach to numerical inversion for a parabolic
  partial differential equation.
\newblock {\em Inverse Problems}, 30(12):125011, 33, 2014.

\bibitem{BoDrMaZa2}
Liliana Borcea, Vladimir Druskin, Alexander~V. Mamonov, and Mikhail Zaslavsky.
\newblock Untangling the nonlinearity in inverse scattering with data-driven
  reduced order models.
\newblock {\em Inverse Problems}, 34(6):065008, 35, 2018.

\bibitem{BoDrMaZa3}
Liliana Borcea, Vladimir Druskin, Alexander~V. Mamonov, and Mikhail Zaslavsky.
\newblock Robust nonlinear processing of active array data in inverse
  scattering via truncated reduced order models.
\newblock {\em J. Comput. Phys.}, 381:1--26, 2019.

\bibitem{BoGuMa}
Liliana Borcea, Fernando Guevara~Vasquez, and Alexander~V. Mamonov.
\newblock A discrete {L}iouville identity for numerical reconstruction of
  {S}chr\"{o}dinger potentials.
\newblock {\em Inverse Probl. Imaging}, 11(4):623--641, 2017.

\bibitem{DrMo}
V.~Druskin and S.~Moskow.
\newblock Three-point finite-difference schemes, {P}ad\'e and the spectral
  {G}alerkin method. {I}. {O}ne-sided impedance approximation.
\newblock {\em Math. Comp.}, 71(239):995--1019 (electronic), 2002.

\bibitem{DrKn}
Vladimir Druskin and Leonid Knizhnerman.
\newblock Gaussian spectral rules for the three-point second differences. {I}.
  {A} two-point positive definite problem in a semi-infinite domain.
\newblock {\em SIAM J. Numer. Anal.}, 37(2):403--422 (electronic), 2000.

\bibitem{DrMaThZa}
Vladimir Druskin, Alexander~V. Mamonov, Andrew~E. Thaler, and Mikhail
  Zaslavsky.
\newblock Direct, nonlinear inversion algorithm for hyperbolic problems via
  projection-based model reduction.
\newblock {\em SIAM J. Imaging Sci.}, 9(2):684--747, 2016.

\bibitem{DrMaZa}
Vladimir Druskin, Alexander~V. Mamonov, and Mikhail Zaslavsky.
\newblock A nonlinear method for imaging with acoustic waves via reduced order
  model backprojection.
\newblock {\em SIAM J. Imaging Sci.}, 11(1):164--196, 2018.

\bibitem{Uh}
G.~Uhlmann.
\newblock Electrical impedance tomography and {C}alder{\'o}n's problem.
\newblock {\em Inverse Problems}, 25(12):123011, 39, 2009.

\end{thebibliography}
\bibliographystyle{plain} 

\end{document}